\newcolumntype{^}{>{\currentrowstyle}}
\def\F{{\mathbb F}}
\newtheorem{lemma}{Lemma}[section]
\newtheorem{theorem}[lemma]{Theorem}
\newtheorem{corollary}[lemma]{Corollary}
\newtheorem{proposition}[lemma]{Proposition}
\newtheorem{remark}[lemma]{Remark}
\journal{Discrete Mathematics}
\begin{document}
\renewcommand{\abstractname}{Abstract}
\renewcommand{\refname}{References}
\renewcommand{\tablename}{Table}
\renewcommand{\arraystretch}{0.9}
\thispagestyle{empty}
\sloppy

\begin{frontmatter}
\title{Thin divisible designs graphs: an interplay between fixed-point free involutions of $(v,k,\lambda)$-graphs and symmetric weighing matrices}
\author[01]{Sergey Goryainov}
\ead{sergey.goryainov3@gmail.com}

\author[01,02]{Willem H. Haemers}
\ead{haemers@uvt.nl}

\author[03,04,05]{Elena V.~Konstantinova}
\ead{e\_konsta@math.nsc.ru}

\author[06]{Honghai Li}
\ead{lhh@jxnu.edu.cn}

\address[01] {School of Mathematical Sciences, Hebei International Joint Research Center for Mathematics and Interdisciplinary Science, Hebei Key Laboratory of Computational Mathematics and Applications, Hebei Workstation for Foreign Academicians, Hebei Normal University, Shijiazhuang  050024, P.R. China}
\address[02]{Department of Econometrics and Operations Research, Tilburg University, Tilburg, The
Netherlands}
\address[03]{Sobolev Institute of Mathematics, Ak. Koptyug av. 4, Novosibirsk 630090, Russia}
\address[04]{Three Gorges Mathematical Research Center, China Three Gorges University, 8 University Avenue, Yichang 443002, Hubei Province, P.R. China}
\address[05]{Novosibirsk State University, Pirogova Str. 2, Novosibirsk, 630090, Russia}

\address[06]{School of  Mathematics and Statistics, Jiangxi Normal University, Nanchang 330022, P.R. China}

%\tnotetext[grant]{The reported study was funded by ...}

\begin{abstract}
In this paper, we illustrate important aspects of the interplay between weighing matrices, $(v,k,\lambda)$-graphs with fixed-point free involutions, and signed graphs with an orthogonal adjacency matrix, which arises from thin divisible design graphs.
In particular, we present two new recursive constructions of regular symmetric Hadamard matrices with constant diagonal (equivalently, two new recursive constructions of strongly regular graphs) and we find a fixed-point free involution in the symplectic graph $Sp(4,q)$, where $q$ is odd, which leads to
orthogonal signings for an infinite family of antipodal distance-regular graphs of diameter 3.
\end{abstract}

\begin{keyword}
divisible design graph; thin divisible design graph; weighing matrix; fixed-point free involution; signed graphs; orthogonal signing; Hadamard matrix; symplectic graph; distance-regular graph
\vspace{\baselineskip}
\MSC[2010] 05C50\sep 05B05\sep 05E30 \sep 05B20
\end{keyword}
% 05C25 graphs and groups
% 05E10 graphs and matrices
% 05E15 combinatorial problems concerning the classical groups;
\end{frontmatter}
%%%%%%%%%%%%%%%%%%%%%%%%%%%%%%%%%%%%%%%%%%%%%%%%%%%%%%%%%%%%%%%%%%%%%%%%%%%%%%%%%%%%%%%%%%%%%%%%%%%%%%%%%%%%%%%%%%%%%%%%%%%%%%%%%%

\section{Introduction} \label{sec1}
A $k$-regular graph on $v$ vertices is called a \emph{divisible design graph} (DDG) with parameters $(v,k,\lambda_1,\lambda_2,m,n)$ if its vertex set can be partitioned into $m$ classes of size $n$ such that any two distinct vertices from the same class have exactly $\lambda_1$ common neighbours and any two vertices from different classes have exactly $\lambda_2$ common neighbours. 
If $\lambda_1 = \lambda_2$ or $m = 1$ or $n = 1$, the divisible design graphs are known as $(v,k,\lambda)$-graphs (where $\lambda=\lambda_1$ or $\lambda_2$) and are also called \emph{improper} DDGs; otherwise, DDGs are called \emph{proper}.
DDGs were introduced in \cite{HKM11} as a bridge between the theory of group divisible designs and graph theory. 
Since then, it has been an active area of studies \cite{HKM11}, \cite{CH14}, \cite{GHKS19}, \cite{KS21}, \cite{S21}, \cite{K22}, \cite{CS22}, \cite{PS22}, \cite{P22}, \cite{T22} \cite{K23}, \cite{GK24}, \cite{DGHS24}, \cite{GK25}, \cite{BDG25} and \cite{K26}. 
In particular, in \cite{CH14} the extreme case of DDGs with $n = 2$ was considered; such divisible design graphs are called \emph{thin}. 

The partition from the definition of a proper DDG is clearly unique
%and called \emph{canonical}. 
and known to be equitable (see~\cite{HKM11}).
If for an improper DDG (that is, a $(v,k,\lambda)$-graph) the partition from the definition is equitable and $m,n>1$, 
we call the improper DDG \emph{almost proper}. 
The justification for this new term is that although the DDG is not proper, it behaves much as a proper DDG.
For proper and almost proper DDGs we will refer to the considered partition as \emph{canonical}.
It is easily seen that, for any thin DDG, proper or almost proper, the permutation swapping the vertices within each of the classes is a fixed-point free involution.
And conversely, a $(v,k,\lambda)$-graph
with a fixed-point free involution gives a thin DDG.
%It was shown that thin divisible design graphs are closely related to symmetric weighing matrices, which can also be viewed as signed graphs (admitting self-loops) whose adjacency matrix is orthogonal. 

 The existence of an (almost) proper thin DDG of order $v$ is equivalent \cite[Section 4]{CH14} to the existence of a pair $(R,Q)$ of square matrices of order $v/2$, where $R$ is a $(0,1,2)$-matrix such that $R^2 = \alpha I + \beta J$ for some integers $\alpha$ and $\beta$ and $Q$ is a symmetric weighing matrix with diagonal entries equal to $0$ or $-1$, such that $Q \equiv R \pmod 2$. 
 This equivalence gives an interesting interplay between thin DDGs and symmetric weighing matrices (see \cite{KS97} for a background on weighing matrices). 
 Moreover, since a symmetric weighing matrix with zero main diagonal can be viewed as the orthogonal adjacency matrix of a signed graph, this interplay extends to the theory of signed graphs \cite{BCKW18}. 
 A signing of a simple graph is called \emph{orthogonal} if the signed adjacency matrix is a weighing matrix. 
 Orthogonal signings of the hypercubes were used  in the proof of the sensitivity conjecture by Huang~\cite{H19}, which makes the subject relevant and important  (note that the orthogonal signings that were used in \cite{H19} had been found earlier in \cite{AACFMN13} where signings of graphs with few distinct eigenvalues were under consideration). 
 Clearly, orthogonal adjacency matrices of signed graphs have just two distinct eigenvalues, and thus they play a role in the characterisation of signed graphs with exactly two eigenvalues, which is a research problem posed in~\cite[Problem 3.9]{BCKW18}.

In this paper, we illustrate some important aspects of the interplay described above. 
The paper is organised as follows. In Section \ref{sec:prelim}, we give preliminary definitions and results.
In Section~\ref{thin} we explain the relation between thin DDGs, symmetric weighing matrices and fixed-point free involutions of $(v,k,\lambda)$-graphs.
In Section \ref{sec:SignedCompleteMultipartiteGraphs}, we construct orthogonal signings for complete multipartite graphs using symmetric Hadamard matrices and symmetric conference matrices, and use them to construct pairs of partner matrices $(R,Q)$, which gives infinitely many thin divisible design graphs. 
As a byproduct of this process, two new recursive constructions of regular symmetric Hadamard matrices with constant diagonal occur. 
In Section \ref{sec:SymplecticGraphAsThinDDG}, we show that the complements of symplectic graphs $Sp(4,q)$, where $q$ is an odd prime power, are almost proper thin DDGs, which leads to orthogonal signings of an infinite family of antipodal distance-regular graphs of diameter 3. 

\section{Preliminaries}\label{sec:prelim}

In this section we give preliminary definitions and results.

\subsection{Distance-regular graphs}
A \emph{distance-regular graph} is a connected regular graph such that for any two vertices $x$ and $y$, the number of vertices at distance $j$ from $x$ and at distance $\ell$ from $y$ depends only upon $j$, $\ell$, and the distance between $x$ and $y$.
The \emph{intersection array} of a distance-regular graph is the array $(b_{0},b_{1},\ldots ,b_{d-1};c_{1},\ldots ,c_{d})$ in which $d$ is the diameter of the graph and for each 
$1\leq j\leq d$, $b_{j}$ gives the number of neighbours of $y$  at distance $j + 1$ from $x$, $c_{j}$ gives the number of neighbours of $y$ at distance $j-1$ from $x$ for any pair of vertices $x$ and $y$ at distance $j$. There is also the number $a_j$ that gives the number of neighbours of $y$ at distance $j$ from $x$. The numbers $a_j$, $b_j$, $c_j$ are called the \emph{intersection numbers} of the graph. They satisfy the equation $a_{j}+b_{j}+c_{j}=k$, where $k = b_0$ is the valency, that is, the number of neighbours, of any vertex.

A distance-regular graph of diameter $d$ is called \emph{antipodal} if the relation on its vertex set defined by the rule ``to coincide or to be at distance $d$'' is an equivalence relation; the equivalence classes are then called \emph{antipodal classes}. Note that whether a distance-regular graph is antipodal or not can be told \cite[Proposition 4.2.2(ii)]{BCN89} from its intersection array.

The following theorem gives an infinite family of antipodal distance-regular graphs of diameter 3; these graphs are sometimes called \emph{Mathon graphs}.

\begin{theorem}[{\cite[Proposition 12.5.3]{BCN89}}]\label{mathon}
Let $q = rm + 1$ be a prime power, where $r > 1$ and either $m$ is even or $q$ is a power of $2$. 
Let $V$ be a vector space of dimension $2$ over $\mathbb{F}_q$ provided with a nondegenerate symplectic form $B$. Let $K$ be the subgroup of the multiplicative group of $\mathbb{F}_q^*$ of index $r$, and let $b \in \mathbb{F}_q^*$. Then the graph $M(q)$ with the vertex set $\{Kx \mid x \in V\setminus \{0\}\}$ where $Kx \sim Ky$ if and only if $B(x,y) \in bK$ is an antipodal distance-regular graph with $r(q+1)$ vertices and intersection array $\{q,q-s-1,1;1,s,q\}$.   
\end{theorem}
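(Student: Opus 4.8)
The plan is to exhibit $M(q)$ as an antipodal $r$-fold cover of the complete graph $K_{q+1}$ and to read the intersection array off a short count in the $2$-dimensional space $V$.

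First I would check that the adjacency rule is well posed. Since $B$ is bilinear, $B(kx,k'y)=kk'B(x,y)$, so for $k,k'\in K$ the condition $B(x,y)\in bK$ does not depend on the chosen coset representatives; $B(x,x)=0\notin bK$, so there are no loops; and the relation is symmetric, i.e.\ $B(x,y)\in bK\Leftrightarrow B(y,x)=-B(x,y)\in bK$, precisely when $-1\in K$, which is equivalent to $(q-1)/r$ being even or $q$ being even --- exactly the hypothesis. The number of vertices is $|V\setminus\{0\}|/|K|=(q^2-1)\cdot r/(q-1)=r(q+1)$, and since $v\mapsto\mu v$ carries the graph built from $b$ to the one built from $\mu^2b$, the isomorphism type is independent of $b$.

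Next I would study the map $Kx\mapsto\langle x\rangle\in\PG(1,q)$, which splits the vertex set into $q+1$ classes of size $r$. As $B(x,\cdot)$ is a nonzero linear functional vanishing exactly on $\langle x\rangle$ and, on each line $\langle w\rangle\neq\langle x\rangle$, taking each coset value of $K$ in $\mathbb{F}_q^*$ once, the vertex $Kx$ has exactly $q$ neighbours, one in each class other than its own; hence the classes are cocliques, the quotient is $K_{q+1}$, and $M(q)$ is an antipodal $r$-cover of it. Moreover, two distinct vertices $Kx,Kcx$ (with $c\notin K$) in a common class have no common neighbour, since a common neighbour $Kz$ would give $B(x,z)\in bK$ and $cB(x,z)\in bK$, forcing $c\in K$. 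From this the diameter is exactly $3$, the graph is connected (alternatively, one displays walks of length $\le 3$ between any two vertices explicitly), and $b_0=q$, $c_1=1$, $b_2=1$, $c_3=q$.

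It remains to pin down $a_1$ and $c_2$, where the real content sits. For vertices $Kx,Ky$ with $x,y$ linearly independent, every remaining vertex has a representative $\alpha x+\beta y$ with $(\alpha,\beta)\neq(0,0)$, unique up to the free scaling action of $K$, and $B(x,\alpha x+\beta y)=\beta B(x,y)$, $B(y,\alpha x+\beta y)=-\alpha B(x,y)$. Hence being a common neighbour of $Kx$ and $Ky$ is equivalent to $\beta$ and $\alpha$ each lying in one prescribed coset of $K$ --- the same coset, again using $-1\in K$ --- and counting the $|K|^2$ admissible pairs and dividing by $|K|$ yields $|K|=(q-1)/r$ common neighbours, both when $Kx\sim Ky$ and when $d(Kx,Ky)=2$. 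Therefore $a_1=c_2=(q-1)/r=:s$, all intersection numbers are constants, $M(q)$ is distance-regular, and $b_1=k-a_1-c_1=q-s-1$, which gives the array $\{q,\,q-s-1,\,1;\,1,\,s,\,q\}$. The main obstacle is marshalling the cover-of-$K_{q+1}$ part --- diameter $3$, the values $b_2,c_3$, and connectedness --- into a clean rigorous argument rather than leaning on folklore; once that is done the coset count for $a_1$ and $c_2$ is routine.
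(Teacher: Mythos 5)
The paper gives no proof of this statement—it is quoted from \cite{BCN89}—and your argument is correct and follows essentially the standard verification given there: realise $M(q)$ as an antipodal $r$-fold cover of $K_{q+1}$ via the fibration $Kx\mapsto\langle x\rangle$, use $-1\in K$ (equivalent to the stated hypothesis on $m$ and $q$) for symmetry, and obtain $a_1=c_2=|K|=(q-1)/r=s$ by the coset count in the basis $\{x,y\}$. The only points stated more tersely than they deserve are the deduction of connectedness and diameter $3$ (which needs $c_2=m\ge 1$, established only afterwards) and the unstated identification $s=(q-1)/r$, but both are immediate from what you prove.
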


\subsection{Strongly regular graphs}\label{SRG}
A \emph{strongly regular graph} is a regular graph with $v$ vertices and degree $k$ such that for some given integers 
$\lambda, \mu \ge 0$
every two adjacent vertices have exactly $\lambda$ common neighbours, and
every two distinct non-adjacent vertices have $\mu$ common neighbours; we then say that $(v, k, \lambda, \mu)$ are the \emph{parameters} of this strongly regular graph. The complement of a strongly regular graph with parameters $(v, k, \lambda, \mu)$ is a strongly regular graph with parameters $(v, v-k-1, v-2-2k+\mu, v-2k+\lambda)$. 
Note that the class of strongly regular graphs with $\mu > 0$ coincides with the class of distance-regular graphs of diameter 2.

A strongly regular graph with parameters $(v,k,\lambda,\mu)$ such that $\lambda = \mu$ is called a \emph{$(v,k,\lambda)$-graph}; see \cite{R71}. Note that the adjacency matrix of a $(v,k,\lambda)$-graph can be interpreted as the incidence matrix of a symmetric $(v,k,\lambda)$-design.

Let us describe an important infinite family of strongly regular graphs which are known as \emph{symplectic graphs}.  

\begin{theorem}[{\cite[Section 2.5.2]{BV22}}]
Let $t \ge 2$ be an integer and $q$ be a prime power. Let $V$ be a $2t$-dimensional vector space over $\mathbb{F}_q$. Let $B$ be a nondegenerate symplectic bilinear form on $V$. Let $Sp(2t,q)$ be the graph whose vertex set is the set of all $1$-dimensional subspaces in $V$ and two subspaces $[x]$, $[y]$ are adjacent whenever $B(x,y) = 0$, where $x$ and $y$ are vectors generating the subspaces.
The graph $Sp(2t,q)$ is strongly regular with parameters
\begin{align*}
    v &= (q^{2t}-1)/(q-1),\\
    k &= q(q^{2t-2}-1)/(q-1),\\
    \lambda &= q^2(q^{2t-4}-1)/(q-1) + q - 1,\\
    \mu &= (q^{2t-2}-1)/(q-1)
\end{align*}
so that $\lambda = \mu-2$ and $\mu = k/q$.
\end{theorem}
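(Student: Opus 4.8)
The plan is to verify all four parameters directly by linear algebra over $\mathbb{F}_q$, the whole point being that the neighbourhood of a vertex $[x]$ is controlled by the ``$B$-perp'' hyperplane $x^\perp = \{\, y \in V : B(x,y) = 0 \,\}$. First I would record two standing remarks. The adjacency relation is well defined because $B$ is bilinear, so replacing $x$ by a nonzero scalar multiple does not change whether $B(x,y)=0$. More importantly, since $B$ is alternating we have $B(x,x)=0$ for every $x$, so $[x] \in x^\perp$ always; this self-incidence is exactly what must be subtracted off when we count neighbours, and it is the feature that distinguishes the symplectic graph from a ``generic'' polar graph. The vertex count is immediate: $v$ is the number of $1$-dimensional subspaces of a $2t$-dimensional space, namely $v = (q^{2t}-1)/(q-1)$.

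For the valency, fix $[x]$. Nondegeneracy of $B$ makes $y \mapsto B(x,y)$ a nonzero linear functional, so $x^\perp$ is a hyperplane of dimension $2t-1$, containing $(q^{2t-1}-1)/(q-1)$ one-dimensional subspaces; discarding $[x]$ itself gives $k = (q^{2t-1}-1)/(q-1) - 1 = q(q^{2t-2}-1)/(q-1)$.

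For $\lambda$ and $\mu$ I would treat both at once. Fix distinct vertices $[x] \ne [y]$, which are automatically linearly independent, and set $W = \langle x, y\rangle$, a $2$-dimensional subspace. The $1$-dimensional subspaces orthogonal to both $x$ and $y$ are precisely those contained in $x^\perp \cap y^\perp = W^\perp$, and nondegeneracy gives $\dim W^\perp = 2t - 2$, so $W^\perp$ carries $(q^{2t-2}-1)/(q-1)$ one-dimensional subspaces. The two cases differ only in how many of these must be discarded. If $[x] \sim [y]$, i.e. $B(x,y)=0$, then $B$ vanishes on $W$, so $W$ is totally isotropic and $W \subseteq W^\perp$; thus both $[x]$ and $[y]$ lie in $W^\perp$ and are excluded, giving $\lambda = (q^{2t-2}-1)/(q-1) - 2$. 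If $[x] \not\sim [y]$, i.e. $B(x,y)\ne 0$, then $B$ restricts to a nondegenerate (hyperbolic) form on $W$, so $W \cap W^\perp = 0$ and neither $[x]$ nor $[y]$ lies in $W^\perp$, giving $\mu = (q^{2t-2}-1)/(q-1)$. In particular $\lambda = \mu - 2$ and, comparing the closed forms for $k$ and $\mu$, $\mu = k/q$; the only thing left is the routine algebraic identity $(q^{2t-2}-1)/(q-1) - 2 = q^2(q^{2t-4}-1)/(q-1) + q - 1$, which one checks by clearing denominators.

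There is no genuine obstacle here — the argument is bookkeeping with dimensions of $B$-orthogonal complements. The one point that needs care, and the one most easily missed, is the alternating identity $B(x,x)=0$: it forces $[x] \in x^\perp$ and hence shifts $k$ (and $\lambda$) down relative to a naive count. A secondary point is making sure to separate the totally isotropic plane (adjacent case) from the hyperbolic plane (non-adjacent case) correctly when splitting off $\lambda$ versus $\mu$.
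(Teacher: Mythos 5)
Your proof is correct and complete. The paper offers no proof of this statement --- it is quoted directly from \cite[Section 2.5.2]{BV22} --- and your direct count of $1$-dimensional subspaces in $x^\perp$ and in $\langle x,y\rangle^\perp$, with the two points you flag handled properly (the alternating identity $B(x,x)=0$ forcing $[x]\in x^\perp$, and the totally isotropic versus hyperbolic dichotomy for the plane $\langle x,y\rangle$ separating $\lambda$ from $\mu$), is exactly the standard argument; the closing identity $(q^{2t-2}-1)/(q-1)-2=q^2(q^{2t-4}-1)/(q-1)+q-1$ does check out.
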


\begin{corollary}
The complement of the graph $Sp(2t,q)$ is a $(v,k,\lambda)$-graph with parameters
\begin{align*}
    v &= (q^{2t}-1)/(q-1),\\
    k &= q^{2t-1},\\
    \lambda &= (q-1)q^{2t-2}.
\end{align*}
\end{corollary}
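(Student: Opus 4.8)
The plan is to derive the parameters of the complement directly from the strongly regular graph parameters of $Sp(2t,q)$ stated in the preceding theorem, using the standard complementation formulas recalled at the start of Section~\ref{SRG}. Write $n = q^{2t-1}$ and note first that, since $Sp(2t,q)$ has $\lambda = \mu - 2$, its complement is strongly regular with $\lambda_{\bar{}} = \mu_{\bar{}}$, so the complement is indeed a $(v,k,\lambda)$-graph; the only work is to simplify the three numbers $v$, $v-k-1$, and $v-2k+\lambda$ (the last two being the degree and the common-neighbour count of the complement) into the claimed closed forms.

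First I would keep $v = (q^{2t}-1)/(q-1)$ unchanged, as it is invariant under complementation. Next I would compute the degree of the complement,
\begin{equation*}
v - k - 1 = \frac{q^{2t}-1}{q-1} - \frac{q(q^{2t-2}-1)}{q-1} - 1 = \frac{q^{2t} - 1 - q^{2t-1} + q - (q-1)}{q-1} = \frac{q^{2t} - q^{2t-1}}{q-1} = q^{2t-1},
\end{equation*}
which gives $k = q^{2t-1}$ for the complement. Finally I would compute the common-neighbour parameter of the complement, namely $v - 2k + \lambda$ where now $k$ and $\lambda$ refer to the symplectic graph; substituting and using $\lambda = q^2(q^{2t-4}-1)/(q-1) + q - 1$ one simplifies
\begin{equation*}
v - 2k + \lambda = \frac{q^{2t} - 1 - 2q(q^{2t-2}-1) + q^2(q^{2t-4}-1)}{q-1} + q - 1,
\end{equation*}
and after collecting the numerator one should obtain $(q-1)q^{2t-2}$, matching the claimed $\lambda$.

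There is essentially no obstacle here: the argument is a routine verification, and the only place one must be slightly careful is the bookkeeping in the last simplification, making sure the $+(q-1)$ term from $\lambda$ and the $-1$ and $-2q\cdot(-1) + q^2\cdot(-1)$ constant terms in the numerator are combined correctly so that everything collapses to the single monomial multiple $(q-1)q^{2t-2}$. One may also sanity-check the result against the stated identity $\mu = k/q$ for the symplectic graph and against small cases (e.g.\ $t=2$, $q=3$) to confirm the formulas, and note in passing that $k = q^{2t-1}$ and $\lambda = (q-1)q^{2t-2}$ are consistent with the general fact that a $(v,k,\lambda)$-graph satisfies $k(k-1) = \lambda(v-1) \cdot \frac{}{}$ — more precisely $k^2 = \lambda v + (k-\lambda)$ — which here reads $q^{4t-2} = (q-1)q^{2t-2}\cdot\frac{q^{2t}-1}{q-1} + q^{2t-1} - (q-1)q^{2t-2}$, providing a final consistency check on the computation.
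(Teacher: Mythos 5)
Your proposal is correct and follows exactly the route the paper intends: the corollary is an immediate consequence of the stated parameters of $Sp(2t,q)$ together with the complementation formulas recalled at the beginning of Section 2.2, and your algebra (including the observation that $\lambda=\mu-2$ forces the complement to have equal $\lambda$ and $\mu$) checks out. No gaps.
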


\subsection{Weighing matrices, conference matrices, Hadamard matrices, and related strongly regular graphs}
A \emph{weighing matrix} of order $n$ and weight $w$ is a square matrix $W$ with entries from the set $\{0, 1, -1\}$ such that 
$WW^T = w I_n$. The rows of $W$ are pairwise orthogonal. Similarly, the columns are pairwise orthogonal. Each row and each column of $W$ has exactly $w$ non-zero elements. A weighing matrix of order $n$ and weight $w$ is denoted by $W(n,w)$. A weighing matrix $W$ is called \emph{symmetric} if $W^T = W$.

A \emph{conference matrix} is a weighing matrix of order $n$ and weight $w = n-1$. Without loss of generality one may assume that the diagonal entries of a conference matrix are zeroes. Clearly, multiplying rows and columns of a conference matrix by $-1$ results in a conference matrix with the same order and weight. A conference matrix is called \emph{normalised} if all off-diagonal entries in the first row and the first column are equal to $1$. For a normalised conference matrix $C$, the matrix $S$ obtained from $C$ by removing the first row and the first column is called the \emph{core} of $C$. 

\begin{theorem}[{\cite[Theorem 8.2.1]{BV22} \& \cite{GS67}}]
Let $C$ be a conference matrix order $n$. If $n > 1$, then $n$ is even. If $n \equiv 2\pmod 4$, then $S = S^T$. If $n \equiv 0 \pmod 4$, then $S = -S^T$.     
\end{theorem}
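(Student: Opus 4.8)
The plan is to normalise $C$ and reduce the symmetry question to a congruence modulo $4$. First normalise $C$ and let $S$ be its core, so that $C=\begin{pmatrix}0&j^{T}\\ j&S\end{pmatrix}$, where $j$ is the all-ones vector of length $n-1$ and $S$ has order $n-1$, zero diagonal, and off-diagonal entries in $\{1,-1\}$. Reading off the blocks of $CC^{T}=(n-1)I_{n}$ gives $Sj=0$ and $SS^{T}=(n-1)I-J$ (all matrices of order $n-1$). Now $Sj=0$ says every row of $S$ sums to $0$; since such a row consists of one $0$ together with $n-2$ entries equal to $\pm1$, this forces $n-2$ to be even — so $n$ is even when $n>1$ — and moreover forces each row of $S$ to contain exactly $(n-2)/2$ entries equal to $-1$. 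This settles the parity statement.

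For the symmetry statements I would reduce $SS^{T}=(n-1)I-J$ modulo $4$. Since $S\equiv J-I\pmod 2$, write $S=(J-I)+2M$ with $M$ an integral matrix; then $M_{ii}=0$ and $M_{il}=\tfrac12(S_{il}-1)\in\{0,-1\}$ for $i\neq l$, so $M_{il}$ is odd exactly when $S_{il}=-1$. Substituting, using $(J-I)^{2}=(n-3)J+I$, and discarding the term $4MM^{T}$, one obtains $2\bigl[(J-I)M^{T}+M(J-I)\bigr]\equiv(n-2)(I-J)\pmod 4$. Extracting the $(i,l)$-entry with $i\neq l$, expressing $(J-I)M^{T}$ and $M(J-I)$ through the row sums $r_{i}=\sum_{k}M_{ik}$, and dividing by $2$, this becomes $M_{il}+M_{li}\equiv r_{i}+r_{l}+\tfrac{n-2}{2}\pmod 2$.

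To finish, the first paragraph gives $r_{i}=-(n-2)/2$ for every $i$ (each row of $S$ has $(n-2)/2$ entries equal to $-1$), so $r_{i}+r_{l}\equiv0\pmod 2$ and the congruence collapses to $M_{il}+M_{li}\equiv(n-2)/2\pmod 2$ — one and the same value for every off-diagonal pair. As $M_{il}+M_{li}$ is even exactly when $S_{il}=S_{li}$ and odd exactly when $S_{il}=-S_{li}$, this means that all off-diagonal pairs of $S$ are symmetric when $(n-2)/2$ is even (i.e.\ $n\equiv2\pmod 4$) and all are antisymmetric when $(n-2)/2$ is odd (i.e.\ $n\equiv0\pmod 4$); combined with $S_{ii}=0$ this yields $S=S^{T}$ and $S=-S^{T}$ respectively. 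The delicate point is the modulo-$4$ bookkeeping, and — conceptually — the observation that all row sums $r_{i}$ of $M$ coincide, which is exactly what forbids a ``mixed'' core in which some pairs are symmetric and others antisymmetric.
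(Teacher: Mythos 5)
Your proof is correct. Note that the paper does not prove this statement at all --- it is quoted from Brouwer--Van Maldeghem and Goethals--Seidel --- so there is no in-paper argument to compare against; judged on its own, your argument is complete. The block computation of $CC^{T}$ correctly yields $Sj=0$ and $SS^{T}=(n-1)I-J$, the parity of $n$ follows as you say, and the reduction of $SS^{T}=(n-1)I-J$ modulo $4$ via $S=(J-I)+2M$ is carried out correctly (in particular, cancelling the factor $2$ in a congruence mod $4$ to get one mod $2$ is legitimate, and the identification of the parity of $M_{il}+M_{li}$ with symmetry versus antisymmetry of the pair $(S_{il},S_{li})$ is right). Your route is a matrix-congruence recasting of the classical proof: the standard argument (van Lint--Seidel, Goethals--Seidel) fixes two rows $i\neq l$, counts the four sign patterns $(\pm,\pm)$ over the remaining $n-3$ columns, and combines the orthogonality relations with rows $1$, $i$, $l$ to get $4n_{++}=n-4-S_{il}-S_{li}$, from which the same dichotomy mod $4$ falls out. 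The two arguments are essentially equivalent in content --- your row sums $r_i$ play the role of the orthogonality with the first row --- but yours packages the bookkeeping globally rather than pairwise, and correctly isolates the key point that the answer depends only on $n\bmod 4$ and hence is uniform over all pairs, excluding a mixed core.
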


\begin{theorem}[{\cite[Proposition 8.2.2]{BV22}}]
Let 
$
C = \left(
\begin{array}{cc}
    0 & \textbf{1}^T \\
     \pm \textbf{1} & S
\end{array}
\right)
$  
be a conference matrix of order $n$. Then 
$
S \otimes S + I \otimes J - J\otimes I
$
is the core of a conference matrix of order $n^2+1$.
\end{theorem}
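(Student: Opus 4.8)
The plan is to check directly that $\widetilde S:=S\otimes S+I\otimes J-J\otimes I$, with $I$ and $J$ the identity and all-ones matrices of the same order $n$ as $S$, has every property demanded of the core of a conference matrix, and then to recover that conference matrix by bordering $\widetilde S$ with a row and a column of ones. For this I first recall the standard properties of the core $S$, all obtained by writing $CC^{T}$ in $2\times 2$ block form: $S$ has zero diagonal and off-diagonal entries in $\{-1,1\}$; $S\mathbf{1}=\mathbf{0}$, whence $SJ=JS=\mathbf{0}$; and $SS^{T}=nI-J$. By the theorem quoted just above, $S^{T}=S$ or $S^{T}=-S$ according as the order of $C$ is $\equiv 2$ or $\equiv 0\pmod 4$, so in either case $S^{2}=\pm(nI-J)$ and, decisively, $S^{2}\otimes S^{2}=(nI-J)\otimes(nI-J)$; a single computation will therefore handle both cases.

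Next I would read off the entrywise shape of $\widetilde S$. Indexing its rows and columns by pairs $(i,a)$ with $1\le i,a\le n$, we have $\widetilde S_{(i,a),(j,b)}=S_{ij}S_{ab}+\delta_{ij}-\delta_{ab}$. Distinguishing the cases by whether $i=j$ and whether $a=b$ and using $S_{ii}=0$: the diagonal entries are $0$; the entries with $i=j,\ a\ne b$ equal $1$; those with $i\ne j,\ a=b$ equal $-1$; those with $i\ne j,\ a\ne b$ equal $S_{ij}S_{ab}\in\{-1,1\}$. Hence $\widetilde S$ has zero diagonal and $\pm1$ off-diagonal entries. It is symmetric, since $\widetilde S^{T}=S^{T}\otimes S^{T}+I\otimes J-J\otimes I=S\otimes S+I\otimes J-J\otimes I=\widetilde S$, because $S^{T}\otimes S^{T}=S\otimes S$ whether $S$ is symmetric or skew. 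Finally $\widetilde S\mathbf{1}=\mathbf{0}$: with $\mathbf{1}=\mathbf{1}\otimes\mathbf{1}$ and the mixed-product identity, $(S\otimes S)\mathbf{1}=(S\mathbf{1})\otimes(S\mathbf{1})=\mathbf{0}$, while $(I\otimes J)\mathbf{1}=\mathbf{1}\otimes(n\mathbf{1})$ and $(J\otimes I)\mathbf{1}=(n\mathbf{1})\otimes\mathbf{1}$ are both equal to $n\mathbf{1}$ and cancel.

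The heart of the proof is the evaluation of $\widetilde S^{2}$. Expanding and applying $(A\otimes B)(C\otimes D)=AC\otimes BD$, every cross term that pairs $S\otimes S$ with $I\otimes J$ or with $J\otimes I$ vanishes, because $SJ=JS=\mathbf{0}$; thus
\[
\widetilde S^{2}=S^{2}\otimes S^{2}+(I\otimes J)^{2}+(J\otimes I)^{2}-\bigl[(I\otimes J)(J\otimes I)+(J\otimes I)(I\otimes J)\bigr].
\]
Now $(I\otimes J)^{2}=n(I\otimes J)$, $(J\otimes I)^{2}=n(J\otimes I)$, the bracket equals $2(J\otimes J)$, and $S^{2}\otimes S^{2}=(nI-J)\otimes(nI-J)=n^{2}(I\otimes I)-n(I\otimes J)-n(J\otimes I)+J\otimes J$. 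Substituting, the $I\otimes J$ and $J\otimes I$ terms cancel and the $J\otimes J$ terms collapse, leaving $\widetilde S^{2}=n^{2}(I\otimes I)-J\otimes J=n^{2}I_{n^{2}}-J_{n^{2}}$. Therefore the bordered matrix $\widetilde C=\left(\begin{smallmatrix}0&\mathbf{1}^{T}\\ \mathbf{1}&\widetilde S\end{smallmatrix}\right)$ of order $n^{2}+1$ has zero diagonal and $\pm1$ off-diagonal entries, and — using $\widetilde S^{T}=\widetilde S$ and $\widetilde S\mathbf{1}=\mathbf{0}$ — one finds that $\widetilde C\widetilde C^{T}=\widetilde C^{2}$ has $(1,1)$-entry $\mathbf{1}^{T}\mathbf{1}=n^{2}$ (the border vectors now having $n^{2}$ entries), zero off-diagonal border blocks, and lower-right block $\mathbf{1}\mathbf{1}^{T}+\widetilde S^{2}=J_{n^{2}}+n^{2}I_{n^{2}}-J_{n^{2}}=n^{2}I_{n^{2}}$; hence $\widetilde C\widetilde C^{T}=n^{2}I_{n^{2}+1}$. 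So $\widetilde C$ is a conference matrix of order $n^{2}+1$ (of weight $n^{2}=(n^{2}+1)-1$) and $\widetilde S$ is, by construction, its core.

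I expect no substantial obstacle: the argument is disciplined Kronecker-product algebra. The two things one must get exactly right are that $SJ=JS=\mathbf{0}$ is precisely what annihilates every mixed term in $\widetilde S^{2}$, and that the factor $2$ in $(I\otimes J)(J\otimes I)+(J\otimes I)(I\otimes J)=2(J\otimes J)$ is exactly what makes the $J\otimes J$ contributions cancel down to $-J\otimes J$. It is also worth recording that the two sign choices in $S^{2}=\pm(nI-J)$ disappear upon forming $S^{2}\otimes S^{2}$, so one computation covers both $2$ and $0$ modulo $4$; in particular $\widetilde C$ always comes out \emph{symmetric}, consistent with $n^{2}+1\equiv 2\pmod 4$ (as $n$ is necessarily odd).
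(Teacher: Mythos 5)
Your proof is correct, and it is the standard direct verification (the paper itself states this result without proof, citing Proposition 8.2.2 of Brouwer--Van Maldeghem, whose argument is exactly this Kronecker-product computation): the identities $S\mathbf{1}=\mathbf{0}$, $SS^{T}=nI-J$ and $S^{T}=\pm S$ kill all mixed terms and make the $J\otimes J$ contributions cancel, yielding $\widetilde S^{2}=n^{2}I-J$. One small point worth flagging: as printed, the statement takes $C$ of order $n$, so $S$ has order $n-1$ and the conclusion should read ``order $(n-1)^{2}+1$''; you have implicitly (and correctly) read it with $S$ of order $n$ and $C$ of order $n+1$, which is the convention in the cited source.
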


Strongly regular graphs with `half case' parameters 
$(v, k, \lambda, \mu) = (4t + 1, 2t, t -
1, t)$ are also known as conference graphs. If $S$ is the Seidel matrix of such a
graph (of order $v$), then bordering it with a first column and top row of 1's,
with 0 in the top left position, yields a symmetric conference matrix of order
$n = v + 1$, and conversely, starting with a symmetric conference matrix and
normalising yields the Seidel matrix $S$ of a strongly regular graph with `half
case' parameters.

\begin{theorem}[{\cite[Theorem 8.2.3]{BV22} \& \cite{B68} \& \cite{vLS66}}]
If $(v, k, \lambda, \mu) = (4t + 1, 2t, t-1, t)$ are the parameters of a strongly regular
graph, then $v$ is the sum of two squares.    
\end{theorem}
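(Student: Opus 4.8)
The plan is to move from the strongly regular graph to a symmetric conference matrix, and then run a rational congruence argument in the spirit of the Bruck--Ryser--Chowla theorem.

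\emph{Step 1: construct a symmetric conference matrix of order $4t+2$.} Let $A$ be the adjacency matrix of the given graph, which has $v=4t+1$ vertices, and let $S=J-I-2A$ be its Seidel matrix. Substituting $(k,\lambda,\mu)=(2t,t-1,t)$ into the strongly regular identity $A^2=kI+\lambda A+\mu(J-I-A)$ and simplifying gives $A^2=tI-A+tJ$, from which one reads off $S\mathbf{1}=\mathbf{0}$ and $S^2=vI-J$. Bordering $S$ with an all-ones row and column,
\[
C=\begin{pmatrix} 0 & \mathbf{1}^{\top}\\ \mathbf{1} & S\end{pmatrix},
\]
one checks that $C$ is symmetric, has all entries in $\{0,1,-1\}$ and zero diagonal, and satisfies $C^2=v\,I_{v+1}$; hence $C$ is a symmetric conference matrix of order $n:=v+1$, and since $v=4t+1$ we have $n\equiv 2\pmod{4}$ and $CC^{\top}=(n-1)I_n$.

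\emph{Step 2: the number-theoretic core.} Since $C$ has integer entries and $C^{\top}I_nC=C^2=(n-1)I_n$, the diagonal quadratic forms $q_1=x_1^2+\cdots+x_n^2$ and $q_2=(n-1)(x_1^2+\cdots+x_n^2)$ are equivalent over $\mathbb{Q}$, hence over each $\mathbb{Q}_p$; in particular their Hasse invariants at $p$ coincide. Computed from these diagonal presentations, the Hasse invariant of $q_1$ is trivial, whereas that of $q_2$ equals $(n-1,n-1)_p^{\binom{n}{2}}$. As $n\equiv 2\pmod{4}$, the exponent $\binom{n}{2}=\tfrac{n(n-1)}{2}$ is odd, and the identity $(a,-a)_p=1$ gives $(n-1,n-1)_p=(n-1,-1)_p$; therefore $(n-1,-1)_p=1$ for every prime $p$, and also $(n-1,-1)_\infty=1$ since $n-1>0$. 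For a prime $p\equiv 3\pmod{4}$ one has $(n-1,-1)_p=(-1)^{\operatorname{ord}_p(n-1)}$, so $\operatorname{ord}_p(n-1)$ is even; by Fermat's two-square theorem $v=n-1$ is a sum of two squares.

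\emph{Where the difficulty lies.} Step~1 is routine linear algebra. The substance is Step~2, and the delicate point there is the bookkeeping of the Hasse invariant: it is precisely because $n\equiv 2\pmod{4}$ that $\binom{n}{2}$ is odd, which is what collapses the obstruction to the single Hilbert symbol $(n-1,-1)_p$ and lets one recognise it as Fermat's criterion. A route that avoids $p$-adic language is an explicit descent in the spirit of Bruck and Ryser: clear denominators in the congruence between $q_1$ and $q_2$, split off $4\times4$ blocks using Lagrange's four-square theorem, and reduce the size of a hypothetical counterexample; this is in essence the argument of \cite{vLS66}, phrased there in terms of equiangular lines in elliptic geometry.
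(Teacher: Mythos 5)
Your proof is correct. The paper does not prove this theorem itself --- it states it as a known result with citations to Brouwer--Van Maldeghem, Belevitch, and van Lint--Seidel --- and your two-step argument (pass to a symmetric conference matrix of order $n=v+1\equiv 2\pmod 4$, then compare Hasse invariants of the rationally congruent forms $I_n$ and $(n-1)I_n$ to force $\operatorname{ord}_p(n-1)$ even for $p\equiv 3\pmod 4$) is precisely the standard proof from those cited sources. The computations in both steps check out, including the key point that $\binom{n}{2}$ is odd exactly because $n\equiv 2\pmod 4$.
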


For example, there is no strongly regular graph with parameters $(21, 10, 4, 5)$
because 21 is not the sum of two squares. Similarly, $v = 33$ is ruled out. For all prime powers $v = 4t+1$ one has the Paley graphs (and for $v > 17$
also further examples). The smallest example of a non-prime power $v$ was given
by Mathon \cite{M78}, who constructed a family of examples including $v = 45$. An
example for the next smallest case, $v = 65$, was constructed by Gritsenko
\cite{G21}. The smallest open case is now $v = 85$. That is, it is unknown whether
there exists a symmetric conference matrix of order 86. For a survey, see \cite{BS14}.

To conclude this section, let us recall the concept of a Hadamard matrix and discuss their kinds. 
An \emph{Hadamard matrix} is a weighing matrix of order $n$ and weight $w = n$. An Hadamard matrix $H$ is called \emph{regular} when all row sums of $H$ are equal. If $J$ denotes the all-1 matrix of order $n$, then all row sums
are $a$ if and only if $HJ = aJ$. (It follows that $JH = aJ$, $a^2 = n$ and $a = \pm \sqrt{n}$.) If a regular Hadamard matrix $H$ is interpreted as the incidence matrix of a block design, with $1$ representing incidence and $-1$ representing non-incidence, then $H$ corresponds to a square $2$-$(v,k,\lambda)$ design with parameters $(4u^2, 2u^2 \pm u, u^2 \pm u)$; a design with these parameters is called a \emph{Menon design}.

The
matrix $H = (h_{ij})$ has constant diagonal when $h_{ii} = e$ for all $i$ and some fixed
$e \in \{\pm 1\}$. Abbreviate the phrase `regular symmetric Hadamard matrix with
constant diagonal' with RSHCD.

Let $H$ be a RSHCD with parameters $n, a, e$. Then $a^2 = n$ so that $a = \pm \sqrt{n}$.
The matrix $-H$ is a RSHCD with parameters $n,-a,-e$, so that there are the
two essentially distinct cases $ae > 0$ and $ae < 0$. Put 
$ae = \varepsilon \sqrt{n}$ with $\varepsilon \in \{\pm 1\}$, 
and call $H$ \emph{of type $\varepsilon$}. If $n > 1$, then $4 \mid n$, so $2 \mid a$, say $a = 2u$. Then 
$A = \frac{1}{2}(J-eH)$ is
the adjacency matrix of a $(v,k,\lambda)$-graph (complete for $(n,\varepsilon) = (4,-1)$)
with parameters
\begin{align*}
    v &= 4u^2,\\
    k &= 2u^2 - \varepsilon u,\\
    \lambda &= u^2 - \varepsilon u.
\end{align*}

Conversely, graphs with these parameters yield RSHCDs.
We see that $A$ is a symmetric incidence matrix, with zero diagonal, of a square 
$(4u^2, 2u^2\pm u; u^2\pm u)$-design
(a Menon design).

Let $T$ be the set of pairs $(n,\varepsilon)$ for which an RSHCD of order $n$ and type $\varepsilon$ exists.
In \cite[Section 8.1.1]{BV22}, the following constructions of RSHCDs were surveyed. First, there exists one recursive construction, namely, the Kronecker product
$$
(m,\delta), (n,\varepsilon) \in T \Rightarrow (mn,\delta\varepsilon) \in T. 
$$
Second, there are the following ten direct constructions:

\begin{enumerate}
    \item $(4,\pm 1), (36, \pm 1) \in T$.
    \item If there exists a Hadamard matrix of order $m$, then $(m^2, 1) \in T$; see \cite[Theorem 4.4]{GS70}.
    \item If both $a - 1$ and $a + 1$ are odd prime powers, and $4\mid a$, then $(a^2, 1) \in T$; see \cite[Theorem 4.3]{GS70}.
    \item If $a + 1$ is a prime power, and there exists a symmetric conference matrix of order $a$, then $(a^2, 1) \in T$; see \cite[Corollary 17]{SW72}.
    \item If there is a set of $t-2$ mutually orthogonal latin squares of order $2t$, then $(4t^2, 1) \in T$. 
    \item Suppose we have a Steiner system $S(2,K, V)$ with 
    $V = K(2K-1)$. If
we form the block graph, and add an isolated point, we get a graph in
the switching class of a regular two-graph. The corresponding Hadamard
matrix is symmetric with constant diagonal, but not regular. If this Steiner
system is invariant under a regular abelian group of automorphisms (which
necessarily has orbits on the blocks of sizes $V$, $V$, and $2K-1$), then by
switching with respect to a block orbit of size $V$ we obtain a strongly
regular graph with parameters
$$
v = 4K^2,~~~~k = K(2K-1),~~~~\lambda = \mu = K(K-1)
$$
showing that $(4K^2,1) \in T$. Steiner systems $S(2,K,K(2K-1))$ are known
for $K = 3, 5, 6, 7$ or $2^t$, but only for $K = 2, 3, 5, 7$ are systems known that
have a regular abelian group of automorphisms. Thus, we find $(196, 1) \in T$.
The required switching set also exists when the design is resolvable: take
the union of $K$ parallel classes. Resolvable designs are known for $K = 3$ or $2^t$; see \cite[Theorem 2.2]{BS71}.
\item $(100,-1) \in T$; \cite{JK03}.
\item If there exists a Hadamard matrix of
order $m$, then $(m^2,-1) \in T$; see \cite{H08}.
\item $(4m^4, 1) \in T$ for all positive integers $m$; see \cite{MX06}.
\item $(4m^4, -1) \in T$ for all positive integers $m$; see \cite{HX10}.
\end{enumerate}

\section{Thin divisible design graphs}\label{thin}
Let $\Gamma$ be a thin DDG with parameters $(2m,k,\lambda_1,\lambda_2,m,2)$, which is proper or almost proper.
Consider a partition of the adjacency matrix of $\Gamma$ associated with the canonical partition of the vertex set of $\Gamma$
$$
\begin{bmatrix}
A_{11} & \cdots & A_{1m}\\
\vdots & \ddots & \vdots\\
A_{m1} & \cdots & A_{mm}
\end{bmatrix},
$$
where two vertices belong to the same $(2\times2)$-block if and only if they belong to the same class of the canonical partition. 
Since $\Gamma$ is (almost) proper, each of the $(2\times 2)$-blocks takes the form
$$
A_{ij} = 
\begin{bmatrix}
a & b \\
b & a
\end{bmatrix}, \mbox{ with } a,b\in\{0,1\}.
$$

If $R$ is the quotient matrix of the canonical partition of $\Gamma$, then $(R)_{i,j} = a+b$. 
The partner $Q$ of $R$ is defined by $Q_{i,j} = a-b$.
Then $R$ is symmetric with entries $0$, $1$ or $2$, $Q$ is symmetric with entries $-1$ and $0$ or $1$.
Moreover, $R \equiv Q \pmod 2$. 
By \cite[Theorem 4.3]{CH14}, the spectrum of $\Gamma$ is the union of the spectrum of $R$ and the spectrum
of $Q$, and $Q^2 = (k-\lambda_1)I$, which means that $Q$ is a symmetric weighing matrix of weight $k-\lambda_1$ having diagonal entries equal to $-1$ or $0$.  
Conversely, the following theorem shows that a pair $(R,Q)$ with the above properties gives a thin DDG.

\begin{theorem}[{\cite[Theorem 4.4]{CH14}}]\label{thm:ThinDDGsRQ}
Let $Q$ be a symmetric weighing matrix of order $m$ and weight $w$ satisfying $(Q)_{i,i} \ne 1$ ($i = 1,\ldots,m$). Let $R$ be a symmetric $(0,1,2)$-matrix, satisfying $R \equiv Q \pmod 2$,  $(R)_{i,i} \ne 2$ ($i = 1,\ldots,m$), and $R^2 = \alpha I + \beta J$ for some integers $\alpha,\beta$, where $\beta$ is even. Then
$$
A = \frac{1}{2}
\begin{bmatrix}
R + Q & R - Q \\
R - Q & R + Q
\end{bmatrix}
$$
is the adjacency matrix of a thin DDG with quotient matrix $R$, partner $Q$, and parameters 
$$
(v = 2m, k = \sqrt{\alpha + m\beta}, \lambda_1 = k - w, \lambda_2 = \beta / 2, m, 2),
$$
where every canonical class has the form $\{i,i+m\}$ for some $i \in \{1,\ldots,m\}$.
\end{theorem}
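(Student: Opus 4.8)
The plan is to verify directly that the matrix $A$ in the statement is the adjacency matrix of a simple graph $\Gamma$ on $2m$ vertices, that $\Gamma$ is $k$-regular with $k=\sqrt{\alpha+m\beta}$, and that the partition of $V(\Gamma)$ into the pairs $\{i,i+m\}$ ($i=1,\dots,m$) exhibits the required divisible design structure. I would start with the first point. Since $R\equiv Q\pmod 2$, every entry of $R+Q$ and of $R-Q$ is even, so $A$ is an integer matrix; since the entries of $R$ lie in $\{0,1,2\}$ and those of $Q$ in $\{-1,0,1\}$, a short case check (if $R_{ij}\in\{0,2\}$ then $Q_{ij}=0$, and if $R_{ij}=1$ then $Q_{ij}=\pm1$) shows every entry of $\tfrac12(R\pm Q)$ is $0$ or $1$. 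On the diagonal, the hypotheses $(R)_{ii}\neq2$, $(Q)_{ii}\neq1$ together with $R\equiv Q\pmod 2$ force $(R)_{ii}=(Q)_{ii}=0$ or $(R)_{ii}=1,(Q)_{ii}=-1$; either way $(R+Q)_{ii}=0$, so the two diagonal blocks of $A$, which equal $\tfrac12(R+Q)$, have zero diagonal, while the off-diagonal blocks of $A$ do not touch the diagonal. Symmetry of $A$ follows from that of $R$ and $Q$, so $A$ is indeed the adjacency matrix of a graph $\Gamma$, and $\{i,i+m\}$ ($i=1,\dots,m$) is an $m$-class partition into classes of size $2$.

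Next I would establish regularity. The sum of the two block-columns in each block-row of $A$ is $\tfrac12\big((R+Q)\mathbf{1}+(R-Q)\mathbf{1}\big)=R\mathbf{1}$, so $\Gamma$ is regular as soon as $R\mathbf{1}$ is a constant vector. This holds because $R$ commutes with $R^2=\alpha I+\beta J$: when $\beta\neq0$ this immediately gives $RJ=JR$, hence $R\mathbf{1}=k\mathbf{1}$ for some constant $k$; in the degenerate case $\beta=0$ one argues from $R^2=\alpha I$ and the nonnegativity of $R$ that $R\mathbf{1}=k\mathbf{1}$ as well. Then $R^2\mathbf{1}=(\alpha+m\beta)\mathbf{1}$ gives $k^2=\alpha+m\beta$, and since $k$ is a row sum of the nonnegative integer matrix $R$, it is the nonnegative root $k=\sqrt{\alpha+m\beta}$; consequently $A\mathbf{1}_{2m}=k\mathbf{1}_{2m}$.

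Finally I would compute $A^2$ block by block, using $Q^2=QQ^{T}=wI$ and $R^2=\alpha I+\beta J$, together with the identities $(R+Q)^2+(R-Q)^2=2R^2+2Q^2$ and $(R+Q)(R-Q)+(R-Q)(R+Q)=2R^2-2Q^2$. This yields
$$
A^2=\frac14
\begin{bmatrix}
2(\alpha+w)I+2\beta J & 2(\alpha-w)I+2\beta J\\[2pt]
2(\alpha-w)I+2\beta J & 2(\alpha+w)I+2\beta J
\end{bmatrix}.
$$
Reading off entries: the number of common neighbours of $i$ and $i+m$ (same class) is the $(i,i)$-entry of the top-right block, namely $\tfrac12(\alpha-w+\beta)=k-w$, so $\lambda_1=k-w$; the number of common neighbours of two vertices in different classes is an off-diagonal entry of one of the four blocks, namely $\tfrac12\beta$, an integer since $\beta$ is even, so $\lambda_2=\beta/2$; and the diagonal entries $\tfrac12(\alpha+w+\beta)$ recover the valency $k$ (giving the consistency identity $k=\tfrac12(\alpha+w+\beta)$). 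Hence $\Gamma$ is a thin DDG with quotient matrix $R$, partner $Q$, and parameters $(2m,k,k-w,\beta/2,m,2)$.

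The step I expect to require the most care is the first one: confirming that $A$ genuinely has $0/1$ entries and zero diagonal, since this is precisely where all three parity/range hypotheses on $R$ and $Q$ are used. (A secondary nuisance is the $\beta=0$ case in the regularity argument.) Once $R\mathbf{1}=k\mathbf{1}$ is in hand, the remaining verification is the mechanical block computation of $A^2$ above.
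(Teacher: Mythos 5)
Your proof is correct. The paper itself gives no proof of this statement (it is quoted from [CH14, Theorem~4.4]), but your argument is the standard direct verification that the cited source uses: the parity/range hypotheses give a $0/1$ matrix with zero diagonal, $R^2=\alpha I+\beta J$ forces $RJ=JR$ and hence constant row sums $k=\sqrt{\alpha+m\beta}$ (with your side remark correctly disposing of the degenerate case $\beta=0$ via disjoint supports of the rows), and the block computation of $A^2$ using $Q^2=wI$ yields $\lambda_1=\tfrac12(\alpha+\beta-w)=k-w$ and $\lambda_2=\beta/2$.
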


Thus Theorem~\ref{thm:ThinDDGsRQ} gives a characterisation of thin DDGs.
Note that it is not excluded that $\lambda_1=\lambda_2$, 
so we may get improper thin DDGs, but then they are almost proper, because the partition is equitable.

Constructions of thin DDGs can be found in~\cite{CH14} and \cite{PS22}.
Let us describe two constructions from \cite{PS22} in terms of $R$ and $Q$ (we define $|Q|$ by $(|Q|)_{i,j}=|Q_{i,j}|$).

\begin{theorem}[{\cite[Constructions 20 and 21]{PS22}}]\label{thm:WeighingCompleteMultipartite1}
Let $Q$ be a symmetric ($4t, 4(t - 1))$-weighing matrix, such that the main diagonal of
$Q$ contains blocks of zeros of size $4$, and define 
$R=|Q|+2I_t\otimes(J_4-I_4)$, and $R'=|Q|+2I_{2t}\otimes(J_2-I_2)$.
Then $(Q,R)$ and $(Q,R')$ satisfy the conditions of Theorem~\ref{thm:ThinDDGsRQ}, so we obtain DDGs with parameters $(8t, 4t + 2, 6, 2t + 2, 4t, 2)$ and
$(8t, 4t - 2, 2, 2t - 2, 4t, 2)$, respectively.
\end{theorem}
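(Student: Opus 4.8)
The plan is to verify the hypotheses of Theorem~\ref{thm:ThinDDGsRQ} for each of the pairs $(Q,R)$ and $(Q,R')$ and then to read off the resulting parameters from that theorem. The conditions on $Q$ cost nothing: $Q$ is assumed to be a symmetric weighing matrix, and since its main diagonal consists of zero blocks one has $(Q)_{i,i}=0\neq 1$.

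The crucial preliminary step, which I expect to carry the argument, is to pin down $|Q|$ exactly. Each row of $Q$ has precisely $4(t-1)$ nonzero entries; among the $4t$ entries of a row, the four lying in the diagonal zero block are forced to vanish, leaving exactly $4(t-1)$ entries outside that block, which must then all be nonzero. Hence
\[
|Q| = J_{4t} - I_t\otimes J_4,
\]
so $|Q|$ is the adjacency matrix of the complete multipartite graph with $t$ classes of size $4$. Writing $J=J_{4t}$, $I=I_{4t}$ and $N=I_t\otimes J_4$, this reads $|Q|=J-N$, and I would record the elementary identities $N^2=4N$, $JN=NJ=4J$ and $J^2=4tJ$.

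For $(Q,R)$ I would use $I_t\otimes(J_4-I_4)=N-I$ to rewrite $R=|Q|+2(N-I)=J+N-2I$. The supports of $|Q|$ and of $2(N-I)$ are disjoint (the latter occupies only off-diagonal positions within the diagonal $4\times4$ blocks), so $R$ is a symmetric $(0,1,2)$-matrix with zero diagonal; reducing modulo $2$ kills $2(N-I)$ and turns $|Q|$ into $Q$, giving $R\equiv Q\pmod2$. Expanding $R^2=(J+N-2I)^2$ with the identities above gives $R^2=4I+4(t+1)J$, so $\alpha=4$ and $\beta=4(t+1)$ is even. Theorem~\ref{thm:ThinDDGsRQ} then yields a thin DDG on $v=8t$ vertices with $k=\sqrt{4+4t\cdot4(t+1)}=2(2t+1)=4t+2$, $\lambda_1=k-w=(4t+2)-4(t-1)=6$ and $\lambda_2=\beta/2=2t+2$, that is, the parameters $(8t,4t+2,6,2t+2,4t,2)$.

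For $(Q,R')$ I would set $P=I_{2t}\otimes(J_2-I_2)$, the involution exchanging $2i-1$ and $2i$, so that $R'=|Q|+2P$. Since each size-$2$ block sits inside a diagonal size-$4$ block, $P$ and $|Q|$ have disjoint supports, and as before $R'$ is a symmetric $(0,1,2)$-matrix with zero diagonal and $R'\equiv Q\pmod2$. Using $J_4=J_2\otimes J_2$ one checks $PN=NP=N$ and $P^2=I$, whence $|Q|P=P|Q|=|Q|$ and therefore $(R')^2=|Q|^2+4|Q|+4I=4I+4(t-1)J$, so $\alpha=4$ and $\beta=4(t-1)$ is even. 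Theorem~\ref{thm:ThinDDGsRQ} now gives $v=8t$, $k=\sqrt{4+4t\cdot4(t-1)}=2(2t-1)=4t-2$, $\lambda_1=(4t-2)-4(t-1)=2$ and $\lambda_2=2t-2$, that is, the parameters $(8t,4t-2,2,2t-2,4t,2)$. The two cases run in parallel; the only place that calls for any thought is the identification of $|Q|$, after which everything is a short computation in the commutative algebra generated by $I$, $J$, $N$ (and $P$).
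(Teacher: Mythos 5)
Your argument is correct, and it is complete where it needs to be. The paper itself gives no proof of this statement (it is quoted from Panasenko--Shalaginov [PS22], Constructions 20 and 21), so there is nothing to compare line by line; but your verification is exactly the natural one and all the computations check out. The one genuinely load-bearing observation is the first: since each row of $Q$ has $4(t-1)$ nonzero entries and four entries are forced to vanish inside the diagonal zero block, every entry outside those blocks is $\pm1$, so $|Q|=J_{4t}-I_t\otimes J_4$ is completely determined. After that, writing $R=J+N-2I$ and $R'=(J-N)+2P$ with $N=I_t\otimes J_4$ and $P=I_{2t}\otimes(J_2-I_2)$, the identities $N^2=4N$, $JN=NJ=4J$, $PN=NP=N$, $P^2=I$ give $R^2=4I+4(t+1)J$ and $(R')^2=4I+4(t-1)J$, and the parameter formulas of Theorem~\ref{thm:ThinDDGsRQ} (with $m=4t$, $w=4(t-1)$) yield precisely $(8t,4t+2,6,2t+2,4t,2)$ and $(8t,4t-2,2,2t-2,4t,2)$. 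The disjoint-support remarks correctly dispose of the $(0,1,2)$ and zero-diagonal conditions, and $R\equiv|Q|\equiv Q\pmod 2$ is immediate. No gaps.
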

\noindent
Note that the authors of \cite{PS22} mistakenly missed the word ``symmetric'' in their statement of the theorem. 
Also they showed uniqueness of the above constructions when $t=6$ and $t=8$, but did not mention how they obtained the required weighing matrices.
Since $Q$ has zero diagonal, $Q$ can be interpreted as the adjacency matrix of a
signed graph, and the underlying graph is the complete multipartite graph $K_{4,4,\ldots,4}$.
In other words, $Q$ gives an orthogonal signing of $K_{4,4,\ldots,4}$.
In the next section we present a general construction of orthogonal signings of complete multipartite graphs, leading to many new thin DDGs.

Another interesting feature of thin DDGs is the relation with fixed-point free involutions. 
The following proposition follows straightforwardly from the structure of the blocks $A_{i,j}$ given above.

\begin{proposition}[{\cite[Proposition 4.1]{CH14}}]\label{prop:InvolutionOfThinDDG}
An (almost) proper thin DDG has a fixed-point free involution, and the orbits are the classes of the canonical partition. 
\end{proposition}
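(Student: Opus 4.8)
The plan is to exhibit the involution explicitly and then verify it is an automorphism directly from the block structure recorded above. Let $\sigma$ be the permutation of the vertex set of $\Gamma$ that swaps the two vertices inside each canonical class and does nothing else. Since each canonical class has size $n=2$, $\sigma$ has no fixed point and $\sigma^{2}=\mathrm{id}$, so $\sigma$ is a fixed-point free involution whose orbits are exactly the canonical classes. Everything then reduces to checking that $\sigma$ preserves adjacency.

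To do this I would pass to matrices, ordering the vertices compatibly with the canonical partition so that the adjacency matrix $A=(A_{ij})$ is partitioned into the $2\times 2$ blocks $A_{ij}=\begin{bmatrix} a & b \\ b & a\end{bmatrix}$ with $a,b\in\{0,1\}$, as in the discussion preceding the statement. Let $P$ be the permutation matrix of $\sigma$: it is block-diagonal with every diagonal block equal to $P_{0}=\begin{bmatrix} 0 & 1 \\ 1 & 0\end{bmatrix}$, and $P=P^{-1}=P^{T}$. Proving that $\sigma$ is an automorphism is then the statement $P^{T}AP=A$, equivalently $PA=AP$.

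The verification is block-by-block: $(PA)_{ij}=P_{0}A_{ij}$ while $(AP)_{ij}=A_{ij}P_{0}$, so it suffices to observe that each block commutes with $P_{0}$. This is immediate, since a $2\times 2$ matrix of the form $\begin{bmatrix} a & b \\ b & a\end{bmatrix}$ equals $aI+bP_{0}$, and both $I$ and $P_{0}$ commute with $P_{0}$; concretely $P_{0}\begin{bmatrix} a & b \\ b & a\end{bmatrix}=\begin{bmatrix} b & a \\ a & b\end{bmatrix}=\begin{bmatrix} a & b \\ b & a\end{bmatrix}P_{0}$. Hence $PA=AP$ and $\sigma\in\mathrm{Aut}(\Gamma)$.

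There is no real obstacle here: the entire content is the centrosymmetry of the blocks $A_{ij}$, which is precisely what ``(almost) proper'' buys us --- for a proper DDG it follows from the uniqueness and equitability of the canonical partition, and in the improper (hence almost proper) case it is part of the hypothesis. The only step worth stating carefully is that this centrosymmetry has already been established just above, so I would invoke it rather than re-derive it; the proof is then the two-line commutation computation together with the observation, built into the construction of $\sigma$, that the orbits of $\sigma$ are the canonical classes.
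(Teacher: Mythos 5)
Your proof is correct and is exactly the argument the paper has in mind: the paper simply remarks that the proposition ``follows straightforwardly from the structure of the blocks $A_{i,j}$ given above,'' i.e.\ from the centrosymmetry $A_{ij}=aI+bP_0$ that you use to verify $PA=AP$. You have merely written out the details of that same one-line observation.
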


In the special case that the DDG is almost proper, the DDG is a $(v,k,\lambda)$-graph. 
Then also the converse holds.

\begin{proposition}\label{involution}
If $\Gamma$ is a $(v,k,\lambda)$-graph admitting a fixed-point free involution, then with the partition into the orbits of the involution, $\Gamma$ is an almost proper thin DDG.   
\end{proposition}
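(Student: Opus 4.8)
The plan is to verify directly that the orbit partition meets the definition of an almost proper thin DDG, isolating the one genuinely substantive point, namely that this partition is equitable. Write $\sigma$ for the given fixed-point free involution; it is an automorphism of $\Gamma$ of order $2$ having no fixed vertex. Every $\sigma$-orbit therefore has size exactly $2$, so the orbits partition the vertex set into $m=v/2$ classes of common size $n=2$. Thus, once we have shown that $\Gamma$ together with this partition is a DDG, it is automatically thin; and since $n=2>1$ and $m=v/2>1$ (a $(v,k,\lambda)$-graph being non-trivial, so $v>2$), only equitability of the partition remains to be verified in order to conclude that it is almost proper.

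First I would dispose of the DDG axioms themselves. Since $\Gamma$ is a $(v,k,\lambda)$-graph it is $k$-regular, and any two distinct vertices — whether or not they are adjacent — have exactly $\lambda$ common neighbours. In particular $x$ and $\sigma(x)$ (which are distinct) have $\lambda$ common neighbours, and any two vertices in distinct orbits have $\lambda$ common neighbours as well. Hence $\Gamma$ with the orbit partition is a DDG with parameters $(v,k,\lambda_1,\lambda_2,m,n)=(v,k,\lambda,\lambda,v/2,2)$; the equality $\lambda_1=\lambda_2$ means it is improper, consistently with it being the $(v,k,\lambda)$-graph we started from.

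The key step is equitability, and it is where the automorphism property of $\sigma$ is used rather than merely its being a permutation. Fix classes $C_i=\{x,\sigma(x)\}$ and $C_j=\{y,\sigma(y)\}$, allowing $i=j$. The number of neighbours of $x$ in $C_j$ is $[x\sim y]+[x\sim\sigma(y)]$, and likewise the number of neighbours of $\sigma(x)$ in $C_j$ is $[\sigma(x)\sim y]+[\sigma(x)\sim\sigma(y)]$. Applying $\sigma$ gives $x\sim y\iff\sigma(x)\sim\sigma(y)$ and, using $\sigma^2=\mathrm{id}$, $x\sim\sigma(y)\iff\sigma(x)\sim y$; summing these two equivalences shows that $x$ and $\sigma(x)$ have equally many neighbours in $C_j$. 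So the partition is equitable, hence canonical, and $\Gamma$ is an almost proper thin DDG. I do not expect a real obstacle; the only point requiring a little care is to set up the equitability computation so that it also covers the diagonal case $C_i=C_j$, which the argument above does without modification.
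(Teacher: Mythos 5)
Your proposal is correct and follows the same route as the paper, whose entire proof is the single sentence that the orbits of the involution establish an equitable partition; you simply spell out the details (orbit sizes, the trivial verification of the DDG axioms from the $(v,k,\lambda)$-graph property, and the automorphism computation showing equitability) that the paper leaves implicit.
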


\begin{proof} The orbits of the involution establish an equitable partition. 
\end{proof}

Proposition~\ref{involution} provides another approach for constructing thin DDGs, by searching for fixed point free involutions in $(v,k,\lambda)$-graphs.
For example, the Lattice graph $L(4)$ (the line graph of $K_{4,4}$) is a $(16,6,2)$-graph and
has a fixed-point free involution which interchanges only nonadjacent vertices. 
So we have an almost proper thin DDG with parameters $(16,6,2,2,8,2)$. 
This is the same DDG as the second one from Theorem~\ref{thm:WeighingCompleteMultipartite1} with $t=2$.

In Section~\ref{sec:SymplecticGraphAsThinDDG} we apply this approach and find almost proper thin DDGs from the symplectic graph $Sp(4,q)$ where $q$ is odd.

\section{Orthogonal signings of complete multipartite graphs}\label{sec:SignedCompleteMultipartiteGraphs}

In this section we construct pairs of matrices $(R,Q)$ satisfying the conditions of Theorem \ref{thm:ThinDDGsRQ}.
First, let us describe some constructions for matrices $R$.

\begin{proposition}\label{prop:MatrixR1}
Let $t \ge 2$ be an integer. Let $u \ge 1$ be an integer such that there exists an RSHCDs of order $4u^2$ and type $\varepsilon$.
Let $T$ be the adjacency block matrix of a complete $t$-partite graph with parts of size $4u^2$ with zero blocks corresponding to the parts placed along the main diagonal of $T$. Let $H_1,H_2,\ldots,H_t$ be RSHCDs (possibly equal) of order $4u^2$, type $\varepsilon$ and having $-1$'s on the main diagonal. Let $R$ be the matrix obtained from $T$ by replacing the diagonal zero blocks with the $(0,2)$-matrices $H_1 + J$, $H_2 + J$, $\ldots$, $H_t + J$, where $J$ is the all-ones matrix of order $4u^2$. Then $R$ is a symmetric $(0,1,2)$-matrix of order $4tu^2$ without $2$ on the main diagonal such that $R^2 = \alpha I + \beta J${\color{red},} where $\alpha = 4u^2$ and $\beta = 4tu^2 - 4\varepsilon u$.    
\end{proposition}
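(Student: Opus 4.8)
The plan is to rewrite $R$ in one clean algebraic form and then expand $R^2$ directly. I would set $N=4u^2$ and $D=\mathrm{blockdiag}(H_1,\dots,H_t)$, a matrix of order $tN=4tu^2$. Since the complete $t$-partite adjacency block matrix $T$ has the all-ones block $J_N$ in every off-diagonal position and the zero block on the diagonal, and $R$ is obtained from $T$ by placing $H_i+J_N$ in the $i$-th diagonal block, one gets the identity $R=J_{tN}+D$, where $J_{tN}$ is the all-ones matrix of order $tN$. All the elementary claims then follow at once: $J_{tN}$ and each $H_i$ are symmetric, hence so is $R$; the off-diagonal blocks of $R$ equal $J_N$, and the diagonal blocks equal $H_i+J_N$, whose entries lie in $\{0,2\}$ because $H_i$ is a $\pm1$ matrix, so $R$ is a $(0,1,2)$-matrix; and since each $H_i$ carries $-1$'s on its diagonal, the diagonal of $R$ is identically $0$, so in particular $R$ has no $2$ on the diagonal.

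For the squared identity I would expand $R^2=J_{tN}^2+J_{tN}D+DJ_{tN}+D^2$ and evaluate the four terms. First, $J_{tN}^2=tN\,J_{tN}$. Second, since each $H_i$ is a symmetric Hadamard matrix of order $N$, $H_i^2=H_iH_i^T=N I_N$, whence $D^2=N I_{tN}$. Third, each $H_i$ is regular with a common row sum $a$; as $H_i$ is of type $\varepsilon$ and has $e=-1$ on its diagonal, the defining relation $ae=\varepsilon\sqrt{N}$ recalled in Section~\ref{sec:prelim} pins down $a=-\varepsilon\sqrt{N}=-2\varepsilon u$, the same value for every $i$. Computing block by block, the $(i,j)$-block of $J_{tN}D$ is $J_NH_j=a\,J_N$, so $J_{tN}D=a\,J_{tN}$, and likewise $DJ_{tN}=a\,J_{tN}$. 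Collecting the terms,
$$
R^2 = N I_{tN} + (tN+2a)\,J_{tN} = 4u^2\,I_{tN} + (4tu^2-4\varepsilon u)\,J_{tN},
$$
which is the asserted identity with $\alpha=4u^2$ and $\beta=4tu^2-4\varepsilon u$.

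The computation is routine once the decomposition $R=J_{tN}+D$ is in hand, so there is no real obstacle; the only points requiring care are, first, the sign of the common row sum $a$ of the $H_i$, which is fixed (not merely determined up to sign) precisely by the hypothesis that each $H_i$ has $-1$'s on its diagonal together with the type-$\varepsilon$ convention $ae=\varepsilon\sqrt N$, and second, the verification that the cross terms $J_{tN}D$ and $DJ_{tN}$ collapse to scalar multiples of $J_{tN}$ rather than of $D$, which is exactly the content of $J_NH_j=H_jJ_N=a\,J_N$ for each $j$.
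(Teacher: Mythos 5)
Your proof is correct and follows the same route as the paper, which simply asserts the identity ``by straightforward calculations''; your decomposition $R=J_{tN}+D$ with $D=\mathrm{blockdiag}(H_1,\dots,H_t)$ is a clean way to organise exactly that computation, and your determination of the common row sum $a=-2\varepsilon u$ from $ae=\varepsilon\sqrt{n}$ with $e=-1$ is the one point the paper leaves implicit.
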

\begin{proof}
By straightforward calculations we get that the diagonal entries of $R^2$ are equal to $4u^2+ 4tu^2-4\varepsilon u$ and the non-diagonal entries of $R^2$ are equal to $4tu^2-4\varepsilon u$. The result then follows. 
\end{proof}

\begin{remark}\label{rem:MatrixRBipartite}
Note that in the special case $t= 2$ in Proposition \ref{prop:MatrixR1}, we have a matrix 
$R =
\begin{bmatrix}
H_1 + J &  J \\    
J & H_2 + J    
\end{bmatrix} 
$
with $\alpha = 4u^2$ and $\beta = 8u^2 - 4\varepsilon u$.    
\end{remark}

In Remark \ref{rem:MatrixRBipartite} we noted a matrix $R$ with $\alpha = 4u^2$ and $\beta = 8u^2 - 4\varepsilon u$. In the next proposition we give another construction of a matrix $R$ with the same parameters $\alpha$ and $\beta$.  

\begin{proposition}\label{prop:MatrixR2}
Let $u \ge 1$ be an integer. Let $H$ be a regular Hadamard matrix of order $4u^2$ and such that $\frac{1}{2}(J+H)$ is the incidence matrix of a Menon design with parameters $(4u^2,2u^2 +\delta u, u^2 + \delta u)$ where $\delta \in \{1,-1\}$ (or, equivalently, let $H$ be a regular Hadamard matrix with row sum $2\delta u$).  
Then the matrix 
$$R = 
\begin{bmatrix}
J & J+H\\
(J+H)^T & J
\end{bmatrix}
$$ is a symmetric $(0,1,2)$-matrix of order $8u^2$, without $2$ 
 on the main diagonal such that $R^2 = \alpha I + \beta J$ where $\alpha = 4u^2$ and $\beta = 8u^2 + 4\delta u$. 
\end{proposition}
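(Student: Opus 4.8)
The plan is to verify directly that $R = \begin{bmatrix} J & J+H \\ (J+H)^T & J \end{bmatrix}$ is symmetric, has no $2$ on the diagonal, and satisfies $R^2 = \alpha I + \beta J$ with $\alpha = 4u^2$, $\beta = 8u^2 + 4\delta u$. Symmetry is immediate since $H$ is symmetric (a regular Hadamard matrix coming from a Menon design here is taken to be symmetric, or at worst the block structure with $(J+H)^T$ in the off-diagonal position makes $R$ symmetric by construction); the diagonal of $R$ consists of the diagonal of $J$, which is all $1$'s, so there is no $2$ on the diagonal. The entries of $J+H$ lie in $\{0,2\}$ and those of $J$ lie in $\{1\}$, so $R$ is indeed a $(0,1,2)$-matrix.

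The computational heart is the block multiplication. Writing $M = J + H$ (of order $4u^2$), I would compute
$$
R^2 = \begin{bmatrix} J^2 + M M^T & JM + MJ \\ M^T J + J M^T & M^T M + J^2 \end{bmatrix}.
$$
Here $J^2 = 4u^2 J$. For the off-diagonal blocks, I use that $HJ = JH = 2\delta u\, J$ (the regularity/row-sum condition), so $MJ = JM = (J+H)J = 4u^2 J + 2\delta u\, J = (4u^2 + 2\delta u)J$, and the off-diagonal blocks of $R^2$ become $2(4u^2 + 2\delta u)J = (8u^2 + 4\delta u)J$. For the diagonal blocks, $MM^T = (J+H)(J+H^T) = J^2 + JH^T + HJ + HH^T = 4u^2 J + 2\delta u\, J + 2\delta u\, J + 4u^2 I = 4u^2 I + (4u^2 + 4\delta u)J$, using $HH^T = 4u^2 I$ (Hadamard of order $4u^2$) and $JH^T = (HJ)^T = 2\delta u\, J$. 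Adding $J^2 = 4u^2 J$ gives $MM^T + J^2 = 4u^2 I + (8u^2 + 4\delta u)J$. The same computation works for $M^T M + J^2$. Hence every diagonal block of $R^2$ is $4u^2 I + (8u^2 + 4\delta u) J$ and every off-diagonal block is $(8u^2 + 4\delta u) J$, which assembles exactly into $4u^2 I_{8u^2} + (8u^2 + 4\delta u) J_{8u^2}$, as claimed.

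There is no real obstacle here; the only points requiring a moment's care are (i) getting the sign/value of the row sum right — from the Menon parameters $(4u^2, 2u^2 + \delta u, u^2 + \delta u)$ one has $\frac{1}{2}(J+H)\mathbf{1} = (2u^2+\delta u)\mathbf{1}$, so $H\mathbf{1} = 2\delta u\,\mathbf{1}$ and likewise $\mathbf{1}^T H = 2\delta u\,\mathbf{1}^T$ by symmetry, which is what makes $HJ = JH = 2\delta u\, J$; and (ii) keeping the block bookkeeping consistent, i.e. noticing that every block of $R^2$ contributes the same multiple of $J$ so that the $\beta J$ term is uniform across the whole $8u^2 \times 8u^2$ matrix while the $\alpha I$ term only appears in the diagonal blocks. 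I would present the proof as a short paragraph of block algebra using $J^2 = 4u^2 J$, $HH^T = 4u^2 I$, and $HJ = JH = 2\delta u\, J$, then simply read off $\alpha = 4u^2$ and $\beta = 8u^2 + 4\delta u$.

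\begin{proof}
Set $n = 4u^2$ and $M = J + H$, where $J = J_n$. Since $H$ is symmetric, $R$ is symmetric, and since the diagonal of $R$ equals the diagonal of $J_n$ in each diagonal block, $R$ has no entry equal to $2$ on its main diagonal; as the entries of $M$ lie in $\{0,2\}$, $R$ is a $(0,1,2)$-matrix of order $8u^2$. Because $\tfrac12(J+H)$ is the incidence matrix of a Menon design with parameters $(n, 2u^2+\delta u, u^2+\delta u)$, we have $H\mathbf{1} = 2\delta u\,\mathbf{1}$, hence $HJ = JH = 2\delta u\, J$, and also $HH^T = HH^{T} = n I = 4u^2 I$ and $J^2 = n J = 4u^2 J$. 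Therefore
$$
MJ = JM = (J+H)J = 4u^2 J + 2\delta u\, J = (4u^2 + 2\delta u)J,
$$
$$
MM^T = (J+H)(J+H) = 4u^2 J + 2\delta u\, J + 2\delta u\, J + 4u^2 I = 4u^2 I + (4u^2 + 4\delta u)J,
$$
and the same identity holds for $M^T M$. Computing block by block,
$$
R^2 = \begin{bmatrix} J^2 + MM^T & JM + MJ \\ M^T J + J M^T & M^T M + J^2 \end{bmatrix}
= \begin{bmatrix} 4u^2 I + (8u^2 + 4\delta u)J & (8u^2 + 4\delta u)J \\ (8u^2 + 4\delta u)J & 4u^2 I + (8u^2 + 4\delta u)J \end{bmatrix}.
$$
This equals $4u^2 I_{8u^2} + (8u^2 + 4\delta u) J_{8u^2}$, so $R^2 = \alpha I + \beta J$ with $\alpha = 4u^2$ and $\beta = 8u^2 + 4\delta u$.
\end{proof}
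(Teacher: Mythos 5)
Your block computation is correct and is precisely the ``straightforward'' verification that the paper omits (its proof of this proposition is the single word \emph{Straightforward}), so you are taking the intended route. One small fix: the hypothesis does not assert that $H$ is symmetric, so the symmetry of $R$ should be justified by its block structure --- the lower-left block is the transpose of the upper-right block and the diagonal blocks $J$ are symmetric --- rather than by an assumed symmetry of $H$ (your computation already handles a possibly non-symmetric $H$ correctly, since it only uses $HJ=JH=2\delta u\,J$ and $HH^T=H^TH=4u^2I$).
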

\begin{proof}
Straightforward.    
\end{proof}

Further, let us describe some constructions for partner weighing matrices $Q$.

\begin{proposition}\label{prop:KroneckerProductOfHadamardAndConference}
Let $u \ge 1$ be an integer. Let $H$ be a symmetric Hadamard matrix of order $y$. Let $C$ be a symmetric conference matrix of order $t \equiv 2 \pmod 4$ with all diagonal entries equal to $0$. Then Kronecker product $C \otimes H$ is a symmetric weighing matrix of order $yt$ and weight $y(t-1)$ having $t$ zero blocks of size $y$ on the main diagonal.    
\end{proposition}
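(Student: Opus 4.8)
The plan is to verify the three asserted properties of $C \otimes H$ one by one, relying on standard facts about the Kronecker product. First, symmetry: since $H^T = H$ and $C^T = C$ by hypothesis, we have $(C \otimes H)^T = C^T \otimes H^T = C \otimes H$, so $C \otimes H$ is symmetric. Second, the weighing-matrix identity: using $(A \otimes B)(A \otimes B)^T = (AA^T) \otimes (BB^T)$ together with $HH^T = y I_y$ (as $H$ is a Hadamard matrix of order $y$) and $CC^T = (t-1) I_t$ (as $C$ is a conference matrix of order $t$, hence a $W(t,t-1)$), we obtain
$$
(C \otimes H)(C \otimes H)^T = \big((t-1)I_t\big) \otimes \big(y I_y\big) = y(t-1)\, I_{yt}.
$$
Since the entries of $C$ lie in $\{0,1,-1\}$ and those of $H$ lie in $\{1,-1\}$, the entries of $C \otimes H$ lie in $\{0,1,-1\}$ as well; hence $C \otimes H$ is a $W(yt, y(t-1))$.

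Third, the block structure of the main diagonal: index the rows and columns of $C \otimes H$ by pairs $(i,a)$ with $1 \le i \le t$ and $1 \le a \le y$, so that the $t$ diagonal blocks of size $y$ are indexed by $i$, and the block in position $(i,j)$ equals $C_{ij}\, H$. Because $C$ has zero diagonal by hypothesis, $C_{ii} = 0$ for every $i$, so each diagonal block $C_{ii} H$ is the $y \times y$ zero matrix. This gives exactly $t$ zero blocks of size $y$ on the main diagonal, as claimed.

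The argument is entirely routine; there is essentially no obstacle. The only point that warrants a moment's care is that the hypothesis $t \equiv 2 \pmod 4$ is what makes a \emph{symmetric} conference matrix $C$ of order $t$ possible in the first place (by the theorem of Goethals--Seidel quoted above, a conference matrix with $t \equiv 0 \pmod 4$ would be skew), so this congruence condition is not used inside the computation but is needed for the hypothesis on $C$ to be consistent. With symmetry, the weight identity, and the diagonal block structure all checked, the proposition follows.
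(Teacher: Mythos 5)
Your proof is correct and complete: the mixed-product property of the Kronecker product, the orthogonality relations $HH^T=yI_y$ and $CC^T=(t-1)I_t$, and the vanishing of the diagonal blocks $C_{ii}H$ are exactly what is needed, and your remark that the congruence $t\equiv 2\pmod 4$ only serves to make the hypothesis on $C$ consistent is also accurate. The paper in fact states this proposition without any proof, treating it as immediate, so your routine verification is precisely the intended argument.
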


\begin{remark}
The smallest conference matrix that can be used in Proposition \ref{prop:KroneckerProductOfHadamardAndConference} is 
$
\begin{bmatrix}
0 & 1\\
1 & 0
\end{bmatrix}
$. 
\end{remark}

\begin{remark}\label{rem:OrthogonalSigningsCompleteMultipartite}
The resulting symmetric weighing matrices from Proposition \ref{prop:KroneckerProductOfHadamardAndConference} can be viewed as orthogonal signings of complete $t$-partite graphs with parts of size $y$. In particular, if $t = 2$, we get an orthogonal signing of a complete bipartite graph with parts of size $y$.
\end{remark}

The following proposition extends Remark \ref{rem:OrthogonalSigningsCompleteMultipartite} and gives a characterisation of orthogonal signings of complete bipartite graphs.

\begin{proposition}\label{prop:CharacterisationSignedCompleteBipartiteGraphs}
The existence of an orthogonal signing of a complete bipartite graph with parts of size $y$ is equivalent to the existence of an Hadamard matrix of order $y$.    
\end{proposition}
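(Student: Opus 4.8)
The plan is to rewrite the condition ``$K_{y,y}$ admits an orthogonal signing'' as a statement about a single square $\pm 1$ matrix and then recognise it as the Hadamard condition. First I would fix the bipartition $\{1,\dots,y\}\cup\{1',\dots,y'\}$. Since a signed graph has a symmetric adjacency matrix, and in $K_{y,y}$ every cross pair of vertices is adjacent while no two vertices inside a part are adjacent, every signing of $K_{y,y}$ has adjacency matrix of the block form
\[
W=\begin{bmatrix} 0 & M\\ M^{T} & 0\end{bmatrix},
\]
where $M$ is a $y\times y$ matrix with all entries in $\{1,-1\}$, recording the signs on the $y^{2}$ edges; conversely each such $M$ gives a signing of $K_{y,y}$. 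So it suffices to determine for which $M$ the matrix $W$ is a weighing matrix.

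Second, I would compute $W^{2}=\operatorname{diag}(MM^{T},\,M^{T}M)$, so that $W$ (which already has all entries in $\{0,1,-1\}$) satisfies $WW^{T}=wI_{2y}$ for some $w$ exactly when $MM^{T}=M^{T}M=wI_{y}$. As every entry of $M$ is nonzero, each diagonal entry of $MM^{T}$ equals $y$, which forces $w=y$; hence the signing is orthogonal if and only if $MM^{T}=yI_{y}$ with $M\in\{\pm 1\}^{y\times y}$, which is precisely the definition of an Hadamard matrix of order $y$. This proves one direction. For the converse I take an Hadamard matrix $H$ of order $y$, set $M=H$ (no symmetry of $H$ is needed, since the block form makes $W$ symmetric automatically), and observe that the resulting $W$ is a weighing matrix $W(2y,y)$ whose underlying graph is $K_{y,y}$, i.e.\ an orthogonal signing.

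The argument is essentially a direct translation, so I do not expect a genuine obstacle; the only point to handle with care is that being a weighing matrix requires both $MM^{T}=yI$ and $M^{T}M=yI$, but for a square matrix these two conditions are equivalent, so nothing is lost in either direction. This also recovers the $t=2$ instance of Proposition~\ref{prop:KroneckerProductOfHadamardAndConference}, where $C=\left(\begin{smallmatrix}0&1\\1&0\end{smallmatrix}\right)$ and the Hadamard matrix is taken to be symmetric.
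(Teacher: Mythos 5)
Your proof is correct and follows essentially the same route as the paper: both write the signed adjacency matrix in the block form $\bigl[\begin{smallmatrix}O & M\\ M^{T} & O\end{smallmatrix}\bigr]$ and identify the weighing condition with $MM^{T}=yI$, i.e.\ $M$ being Hadamard. Your version merely spells out the computation of $W^{2}$ and the forced weight $w=y$, which the paper leaves implicit.
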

\begin{proof}
The weighing matrix that is an orthogonal signing of a complete bipartite graph with parts of size $y$ can always be written as
$\begin{bmatrix}
O & H \\
H^T & O
\end{bmatrix}$, where $H$ is an Hadamard matrix of order $y$. Conversely, if $H$ is an Hadamard matrix of order $y$, then 
$\begin{bmatrix}
O & H \\
H^T & O
\end{bmatrix}$
is a symmetric weighing matrix that is an orthogonal signing of a complete bipartite graph with parts of size $y$.
\end{proof}

The following theorem gives infinitely many pairs of matrices $(R,Q)$ that satisfy the conditions of Theorem \ref{thm:ThinDDGsRQ}.

\begin{theorem}\label{thm:ThinDDGsFromCompleteMultipartiteGraphs}
Let $t \ge 3$ be an integer. Let $u \ge 1$ be an integer. Let $H_1,H_2, \ldots, H_t$ be RSHCDs of order $4u^2$ and type $\varepsilon$. Let $R$ be the corresponding $(0,1,2)$-matrix of order $4tu^2$ obtained according to Proposition \ref{prop:MatrixR1}. Let $Q$ be a symmetric weighing matrix of order $4tu^2$ with $t$ zero blocks of size $4u^2$ on the main diagonal, that is, an orthogonal signing of a complete $t$-partite graph with parts of size $4u^2$. Then the pair $(R,Q)$ satisfies the conditions of Theorem \ref{thm:ThinDDGsRQ} and thus produces a thin DDG with parameters $(8tu^2,4tu^2-2\varepsilon u,4u^2-2\varepsilon u,2tu^2-2\varepsilon u, 4tu^2,2)$.       
\end{theorem}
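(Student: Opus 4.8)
The plan is to check that the pair $(R,Q)$ satisfies every hypothesis of Theorem~\ref{thm:ThinDDGsRQ} with $m = 4tu^2$, $\alpha = 4u^2$, $\beta = 4tu^2 - 4\varepsilon u$, and $w = 4u^2(t-1)$, and then to substitute these values into the parameter formulas of that theorem. The matrix $R$ has the required form already: by Proposition~\ref{prop:MatrixR1} it is a symmetric $(0,1,2)$-matrix of order $4tu^2$ with no $2$ on the main diagonal and with $R^2 = \alpha I + \beta J$, where $\alpha = 4u^2$ and $\beta = 4tu^2 - 4\varepsilon u$; since $4 \mid \beta$, the parity requirement on $\beta$ is met.

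Next I would examine $Q$. As an orthogonal signing of the complete $t$-partite graph $K_{4u^2,\dots,4u^2}$, it is symmetric with $t$ zero diagonal blocks of size $4u^2$, so $(Q)_{i,i} = 0 \ne 1$, and each row contains exactly $4u^2(t-1)$ nonzero entries; symmetry then gives $Q^2 = wI$ with $w = 4u^2(t-1)$. The only step requiring a little care is the congruence $R \equiv Q \pmod 2$. Both matrices respect the block partition into parts of size $4u^2$: every off-diagonal block of $R$ equals $J$ while every off-diagonal block of $Q$ is a $(\pm 1)$-matrix with no zero entry, so both reduce to the all-ones block modulo $2$; the $i$-th diagonal block of $R$ is $H_i + J$, whose entries lie in $\{0,2\}$, while the $i$-th diagonal block of $Q$ is zero, so both reduce to the zero block modulo $2$. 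Hence $R \equiv Q \pmod 2$, and all hypotheses of Theorem~\ref{thm:ThinDDGsRQ} hold.

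It then remains to compute the parameters from Theorem~\ref{thm:ThinDDGsRQ}. Immediately $v = 2m = 8tu^2$ and $\lambda_2 = \beta/2 = 2tu^2 - 2\varepsilon u$. For $k$ I would use the identity $\alpha + m\beta = 4u^2 + 4tu^2(4tu^2 - 4\varepsilon u) = 4u^2(2tu - \varepsilon)^2$ (using $\varepsilon^2 = 1$), which, since $2tu - \varepsilon > 0$ for $t \ge 3$ and $u \ge 1$, gives $k = 2u(2tu - \varepsilon) = 4tu^2 - 2\varepsilon u$; then $\lambda_1 = k - w = 4tu^2 - 2\varepsilon u - 4u^2(t-1) = 4u^2 - 2\varepsilon u$. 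This reproduces the asserted tuple $(8tu^2,\, 4tu^2 - 2\varepsilon u,\, 4u^2 - 2\varepsilon u,\, 2tu^2 - 2\varepsilon u,\, 4tu^2,\, 2)$. There is no real structural obstacle here: the argument is a verification of hypotheses followed by a short calculation, and the only place one can slip is in matching the two block decompositions modulo $2$ and in extracting the square root for $k$.
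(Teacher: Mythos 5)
Your verification is correct and is exactly the argument the paper intends (the theorem is stated without an explicit proof, being a direct check of the hypotheses of Theorem~\ref{thm:ThinDDGsRQ} using Proposition~\ref{prop:MatrixR1}). Your identification of $w=4u^2(t-1)$, the blockwise congruence $R\equiv Q\pmod 2$, and the factorisation $\alpha+m\beta=4u^2(2tu-\varepsilon)^2$ all match the computations implicit in the paper.
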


\begin{remark}
Since RSHCDs of order $4$ exist for types $\varepsilon = 1$ and $\varepsilon = -1$,
Theorem \ref{thm:ThinDDGsFromCompleteMultipartiteGraphs} is a generalisation of Theorem \ref{thm:WeighingCompleteMultipartite1}. Also, in view of Proposition \ref{prop:KroneckerProductOfHadamardAndConference}, we actually get infinitely many thin DDGs in Theorem \ref{thm:ThinDDGsFromCompleteMultipartiteGraphs} while Theorem \ref{thm:WeighingCompleteMultipartite1} in its original statement in \cite{PS22} did require the existence of orthogonal signings of complete multipartite graphs with parts of size 4 but did not clarify this issue.
\end{remark}

\begin{remark}
Note that in the tuple of parameters in Theorem \ref{thm:ThinDDGsFromCompleteMultipartiteGraphs} we have $\lambda_1 = 4u^2-2\varepsilon u$, $\lambda_2 = 2tu^2-2\varepsilon u$, which implies $\lambda_1 \ne \lambda_2$ whenever $t \ge 3$, as was assumed. However, if $t = 2$, we would have equality $\lambda_1 = \lambda_2$.  
\end{remark}

Further we consider the case $t = 2$ that was excluded from Theorem \ref{thm:ThinDDGsFromCompleteMultipartiteGraphs} since, in view of Proposition \ref{prop:CharacterisationSignedCompleteBipartiteGraphs}, we can say more in this case and since we get two recursive constructions of RSHCDs deserving to be presented separately.

\begin{theorem}\label{thm:ThinAlmostProperDDGsFromCompleteBipartiteGraphs}
Let $u \ge 1$ be an integer. Let $H_1,H_2$ be RSHCDs of order $4u^2$ and type $\varepsilon$, having all diagonal entries equal to $-1$.  
Let 
$$R = 
\begin{bmatrix}
J+H_1 & J\\
J & J + H_2
\end{bmatrix}
$$ be the corresponding symmetric $(0,1,2)$-matrix of order $8u^2$ obtained according to Proposition \ref{prop:MatrixR1} with $t = 2$. 
Let 
$$Q = 
\begin{bmatrix}
O & H\\
H^T & O
\end{bmatrix}
$$ 
be a symmetric weighing matrix of order $8u^2$ and weight $4u^2$ with two zero blocks of size $4u^2$ on the main diagonal, that is, an orthogonal signing of a complete bipartite graph with parts of size $4u^2$, where $H$ is an arbitrary Hadamard matrix. Then the pair $(R,Q)$ satisfies the conditions of Theorem \ref{thm:ThinDDGsRQ} and thus produces an almost proper thin DDG with parameters 
$$(16u^2,8u^2-2\varepsilon u,4u^2-2\varepsilon u, 4u^2-2\varepsilon u, 8u^2,2)$$ 
and adjacency matrix
$$
\frac{1}{2}
\begin{bmatrix}
J + H_1 & J - H & J + H_1 & J + H \\
(J - H)^T & J + H_2 & (J + H)^T & J + H_2\\
J + H_1 & J + H & J + H_1 & J - H\\
(J + H)^T & J + H_2 & (J - H)^T & J + H_2
\end{bmatrix}.
$$       
\end{theorem}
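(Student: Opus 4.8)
The plan is to check that the pair $(R,Q)$ satisfies every hypothesis of Theorem~\ref{thm:ThinDDGsRQ}, then to read off the parameters from that theorem and expand its block formula for the adjacency matrix. First I would deal with $Q$: since $H$ is a Hadamard matrix of order $4u^{2}$ we have $HH^{T}=H^{T}H=4u^{2}I$, and the displayed block form of $Q$ is visibly symmetric, so $Q^{T}=Q$ and $Q^{2}=4u^{2}I_{8u^{2}}$. Hence $Q$ is a symmetric weighing matrix of order $m=8u^{2}$ and weight $w=4u^{2}$ with two $4u^{2}\times4u^{2}$ zero blocks on the main diagonal (this is the general form of an orthogonal signing of the complete bipartite graph $K_{4u^{2},4u^{2}}$ by Proposition~\ref{prop:CharacterisationSignedCompleteBipartiteGraphs}), and in particular $(Q)_{i,i}=0\neq1$.

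Next I would deal with $R$. It is exactly the matrix produced by Proposition~\ref{prop:MatrixR1} with $t=2$, i.e.\ the one in Remark~\ref{rem:MatrixRBipartite}, so it is a symmetric $(0,1,2)$-matrix with $R^{2}=\alpha I+\beta J$ where $\alpha=4u^{2}$ and $\beta=8u^{2}-4\varepsilon u=4(2u^{2}-\varepsilon u)$, which is even; and since each $H_{i}$ has $-1$ on its diagonal, each diagonal block $J+H_{i}$ has $0$ on its diagonal, so $(R)_{i,i}=0\neq2$. Finally, every entry of a block $J+H_{i}$ lies in $\{0,2\}$ while every entry of $H$ or $H^{T}$ equals $\pm1$, so both $R$ and $Q$ reduce modulo $2$ to the block matrix with zero diagonal blocks and all-ones off-diagonal blocks; hence $R\equiv Q\pmod2$.

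With all the hypotheses verified, Theorem~\ref{thm:ThinDDGsRQ} produces a thin DDG with quotient matrix $R$, partner $Q$, $v=2m=16u^{2}$, $\lambda_{2}=\beta/2=4u^{2}-2\varepsilon u$, and $k=\sqrt{\alpha+m\beta}$. The one genuine computation is to notice that $\alpha+m\beta=4u^{2}+8u^{2}(8u^{2}-4\varepsilon u)=4u^{2}(4u-\varepsilon)^{2}$ (using $\varepsilon^{2}=1$); since $4u-\varepsilon>0$ this gives $k=2u(4u-\varepsilon)=8u^{2}-2\varepsilon u$, whence $\lambda_{1}=k-w=4u^{2}-2\varepsilon u=\lambda_{2}$. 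So the DDG is improper; but $m=8u^{2}>1$, $n=2>1$, and, as noted after Theorem~\ref{thm:ThinDDGsRQ}, the canonical partition is equitable, so the DDG is almost proper with parameters $(16u^{2},\,8u^{2}-2\varepsilon u,\,4u^{2}-2\varepsilon u,\,4u^{2}-2\varepsilon u,\,8u^{2},\,2)$. For the adjacency matrix, I would substitute the block forms of $R$ and $Q$ into $A=\tfrac12\left[\begin{smallmatrix}R+Q&R-Q\\R-Q&R+Q\end{smallmatrix}\right]$ and use $(J\pm H)^{T}=J\pm H^{T}$; this expands $A$ into the claimed $4\times4$ array of $4u^{2}\times4u^{2}$ blocks up to replacing $H$ by $-H$ (again an arbitrary Hadamard matrix), equivalently up to relabelling vertices within the canonical classes. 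I do not expect a real obstacle: the argument is hypothesis-checking plus bookkeeping, and the only places needing care are the perfect-square simplification of $\alpha+m\beta$, matching the expanded block matrix to the stated one, and invoking equitability to upgrade ``improper'' to ``almost proper''.
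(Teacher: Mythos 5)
Your proposal is correct and follows exactly the route the paper intends (the paper states this theorem without a written proof, leaving the verification of the hypotheses of Theorem~\ref{thm:ThinDDGsRQ} to the reader): you check the weighing-matrix and $(0,1,2)$-matrix conditions, the congruence mod $2$, simplify $\alpha+m\beta=4u^2(4u-\varepsilon)^2$ to get $k=8u^2-2\varepsilon u$, and observe $\lambda_1=\lambda_2$ with an equitable partition, hence almost proper. Your remark that the displayed adjacency matrix matches your expansion only after replacing $H$ by $-H$ (or relabelling within canonical classes) is accurate and harmless, since $-H$ is again an arbitrary Hadamard matrix.
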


\begin{remark}\label{rem:RecursiveConstruction1}
The almost proper thin DDGs from Theorem \ref{thm:ThinAlmostProperDDGsFromCompleteBipartiteGraphs} are actually $(v,k,\lambda)$-graphs with parameters ``RSHCD'', that is, $(v,k,\lambda)$-graphs that can be converted into RSHCDs of order $16u^2$ and type $\varepsilon$. Thus, Theorem \ref{thm:ThinAlmostProperDDGsFromCompleteBipartiteGraphs} can be interpreted as a recursive construction of RSHCDs where the input is two RSHCDs $H_1, H_2$ of order $4u^2$ and the same type $\varepsilon$ and also an arbitrary Hadamard matrix $H$ of order $4u^2$, and the output is the RSHCD 
$$
\begin{bmatrix}
H_1 & - H & H_1 & H \\
-H^T & H_2 & H^T & H_2\\
H_1 & H & H_1 & -H\\
H^T & H_2 & -H^T & H_2    
\end{bmatrix}
$$
of order $16u^2$ (that is, four times more) and the same type $\varepsilon$, or, equivalently,
$$
\begin{bmatrix}
H_1 & H_1 & -H & H \\
H_1 & H_1 & H & -H\\
-H^T & H^T & H_2 & H_2\\
H^T & -H^T & H_2 & H_2    
\end{bmatrix}.
$$
Moreover, if $H_1 = H_2 = H$, then the resulting adjacency matrix
$$
\frac{1}{2}
\begin{bmatrix}
J + H & J - H & J + H & J + H \\
J - H & J + H & J + H & J + H\\
J + H & J + H & J + H & J - H\\
J + H & J + H & J - H & J + H
\end{bmatrix}
$$
corresponds to the RSHCD 
$$
\begin{bmatrix}
H & -H & H & H \\
-H & H & H & H\\
H & H & H & -H\\
H & H & -H & H
\end{bmatrix},
$$ 
which is as the Kronecker product of the RSHCD
$$
\begin{bmatrix}
1 & -1 & 1 & 1\\
-1 & 1 & 1 & 1\\
1 & 1 & 1 & -1\\
1 & 1 & -1 & 1
\end{bmatrix}
$$
of order $4$ and positive type and the RSHCD $H$ of type $\varepsilon$.
\end{remark}

\begin{theorem}\label{thm:ThinAlmostProperDDGsFromPairOfCompleteGraphs}
Let $u \ge 1$ be an integer. Let $H$ be a regular Hadamard matrix of order $4u^2$ and with row sum $2\delta u$ where $\delta \in \{1,-1\}$.  
Let
$$R = 
\begin{bmatrix}
J & J + H\\
(J+H)^T & J
\end{bmatrix}
$$ 
be the corresponding $(0,1,2)$-matrix of order $8u^2$ obtained according to Proposition \ref{prop:MatrixR2}. 
Let $H_1,H_2$ be symmetric Hadamard matrices having all diagonal elements equal to $-1$.
Put 
$$Q = 
\begin{bmatrix}
H_1 & O\\
O & H_2
\end{bmatrix}
$$ 
to be a symmetric weighing matrix of order $8u^2$. 
Then the pair $(R,Q)$ satisfies the conditions of Theorem \ref{thm:ThinDDGsRQ} and thus produces an almost proper thin DDG with parameters 
$$(16u^2,8u^2+2\delta u,4u^2+2\delta u, 4u^2+2\delta u, 8u^2,2)$$ 
and adjacency matrix
$$
\frac{1}{2}
\begin{bmatrix}
J+H_1 & J+H & J-H_1 & J+H \\
(J+H)^T & J+H_2 & (J+H)^T & J-H_2 \\
J-H_1 & J+H  &  J+H_1 & J+H  \\
(J+H)^T & J-H_2 &  (J+H)^T & J+H_2 
\end{bmatrix}.
$$           
\end{theorem}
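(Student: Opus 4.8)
The plan is to check that $(R,Q)$ satisfies every hypothesis of Theorem~\ref{thm:ThinDDGsRQ} and then read off both the parameters and the adjacency matrix. First I would dispose of $Q$: it has order $m=8u^2$, it is symmetric because $H_1$ and $H_2$ are, and since $H_1H_1^{T}=H_2H_2^{T}=4u^2 I$ its block-diagonal shape gives $QQ^{T}=4u^2 I$, so $Q$ is a symmetric weighing matrix of weight $w=4u^2$; its diagonal entries are precisely those of $H_1$ and $H_2$, all equal to $-1$, hence $(Q)_{i,i}\neq 1$. For $R$, Proposition~\ref{prop:MatrixR2} already supplies everything needed: $R$ is a symmetric $(0,1,2)$-matrix of order $8u^2$ with no $2$ on the diagonal and $R^2=\alpha I+\beta J$, where $\alpha=4u^2$ and $\beta=8u^2+4\delta u$ is even.

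The only slightly delicate point is the congruence $R\equiv Q\pmod 2$, which I would verify block by block. The two diagonal blocks of $R$ equal $J$, all of whose entries are odd, and they sit opposite $H_1$ and $H_2$, whose $\pm1$ entries are also odd; the two off-diagonal blocks of $R$ are $J+H$ and $(J+H)^{T}$, whose entries lie in $\{0,2\}$ and are thus even, and they sit opposite the zero blocks of $Q$. Hence $R\equiv Q\pmod 2$, and all hypotheses of Theorem~\ref{thm:ThinDDGsRQ} hold.

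Next I would compute the parameters from Theorem~\ref{thm:ThinDDGsRQ} with $m=8u^2$, $w=4u^2$, $\alpha=4u^2$, $\beta=8u^2+4\delta u$. The key identity is $\alpha+m\beta=4u^2(16u^2+8\delta u+1)=4u^2(4u+\delta)^2$ (using $\delta^2=1$), so that $k=\sqrt{\alpha+m\beta}=2u(4u+\delta)=8u^2+2\delta u$, the positive root being correct since $4u+\delta>0$ for $u\ge 1$; then $\lambda_1=k-w=4u^2+2\delta u$ and $\lambda_2=\beta/2=4u^2+2\delta u$, yielding the stated tuple $(16u^2,\,8u^2+2\delta u,\,4u^2+2\delta u,\,4u^2+2\delta u,\,8u^2,\,2)$. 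Because $\lambda_1=\lambda_2$ the resulting thin DDG is improper, but it is almost proper: by Theorem~\ref{thm:ThinDDGsRQ} its canonical classes are the pairs $\{i,i+8u^2\}$, this partition is equitable, and $m,n>1$ (compare the remark following Theorem~\ref{thm:ThinDDGsRQ}).

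Finally, the displayed adjacency matrix is obtained by substituting the block forms $R+Q=\left[\begin{smallmatrix}J+H_1 & J+H\\ (J+H)^{T} & J+H_2\end{smallmatrix}\right]$ and $R-Q=\left[\begin{smallmatrix}J-H_1 & J+H\\ (J+H)^{T} & J-H_2\end{smallmatrix}\right]$ into $A=\tfrac12\left[\begin{smallmatrix}R+Q & R-Q\\ R-Q & R+Q\end{smallmatrix}\right]$ and regrouping into a $4\times4$ array of blocks of order $4u^2$. I do not anticipate a genuine obstacle: apart from the parity check of the four blocks and the factorisation $\alpha+m\beta=4u^2(4u+\delta)^2$, the argument is bookkeeping inherited from Proposition~\ref{prop:MatrixR2} and Theorem~\ref{thm:ThinDDGsRQ}.
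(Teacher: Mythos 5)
Your verification is correct and is exactly the argument the paper intends: the theorem is stated without an explicit proof precisely because it is a direct application of Theorem~\ref{thm:ThinDDGsRQ} once Proposition~\ref{prop:MatrixR2} supplies $R^2=\alpha I+\beta J$ with $\alpha=4u^2$, $\beta=8u^2+4\delta u$, and one checks the parity match and the weight $w=4u^2$ of the block-diagonal $Q$. Your factorisation $\alpha+m\beta=4u^2(4u+\delta)^2$, the resulting parameters, the observation that $\lambda_1=\lambda_2$ makes the graph an almost proper (equitably partitioned) improper thin DDG, and the block substitution into $A=\tfrac12\bigl[\begin{smallmatrix}R+Q & R-Q\\ R-Q & R+Q\end{smallmatrix}\bigr]$ all agree with the paper.
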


\begin{remark}\label{rem:RecursiveConstruction2}
The almost proper thin DDGs from Theorem \ref{thm:ThinAlmostProperDDGsFromPairOfCompleteGraphs} are actually $(v,k,\lambda)$-graphs with parameters ``RSHCD'', that is, $(v,k,\lambda)$-graphs that can be converted into RSHCDs of order $16u^2$ and type $-\delta$. Thus, Theorem \ref{thm:ThinAlmostProperDDGsFromPairOfCompleteGraphs} can be interpreted as a recursive construction of RSHCDs where the input is a regular Hadamard matrix $H$ of order $4u^2$ and with row sum $2\delta u$ where $\delta \in \{1,-1\}$ and also two symmetric Hadamard matrices $H_1, H_2$ of order $4u^2$, with all diagonal elements equal to $-1$, and the output is the RSHCD 
$$
\begin{bmatrix}
H_1 & H & -H_1 & H \\
H^T & H_2 & H^T & -H_2 \\
-H_1 & H  &  H_1 & H  \\
H^T & -H_2 &  H^T & H_2    
\end{bmatrix}
$$
of order $16u^2$ (that is, four times more) and type $-\delta$, or, equivalently,
$$
\begin{bmatrix}
H_1 & -H_1 & H & H \\
-H_1 & H_1 & H & H\\
H^T & H^T & H_2 & -H_2\\
H^T & H^T & -H_2 & H_2    
\end{bmatrix}.
$$
Moreover, if $H_1 = H_2 = H$, then the resulting adjacency matrix
$$
\frac{1}{2}
\begin{bmatrix}
J+H & J+H & J-H & J+H \\
J+H & J+H & J+H & J-H \\
J-H & J+H  &  J+H & J+H  \\
J+H & J-H &  J+H & J+H 
\end{bmatrix}
$$
corresponds to the RSHCD 
$$
\begin{bmatrix}
H & H & -H & H \\
H & H & H & -H\\
-H & H & H & H\\
H & -H & H & H
\end{bmatrix},
$$ 
which is as the Kronecker product of the RSHCD
$$
\begin{bmatrix}
1 & 1 & -1 & 1\\
1 & 1 & 1 & -1\\
-1 & 1 & 1 & 1\\
1 & -1 & 1 & 1
\end{bmatrix}
$$
of order $4$ and positive type and the RSHCD $H$ of type $-\delta$.
\end{remark}

\section{The symplectic graph}\label{sec:SymplecticGraphAsThinDDG}

In Section~\ref{SRG} we saw that the complement of the symplectic graph $Sp(2t,q)$ is a $(v,k,\lambda)$-graph.
Here we show that if $t=2$ and $q$ is odd, $Sp(4,q)$ has a fixed point free involution, which makes the complement an almost proper thin DDG.

\begin{theorem}
If $q$ is odd, the symplectic graph $Sp(4,q)$ has a fixed-point free involution, which interchanges only adjacent vertices.
\end{theorem}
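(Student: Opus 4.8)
The plan is to obtain the involution from a linear map $g\in GL(V)$ that is a \emph{symplectic similitude} of $B$, i.e.\ $B(gx,gy)=\mu\,B(x,y)$ for all $x,y\in V$ and some fixed multiplier $\mu\in\mathbb{F}_q^{*}$; such a $g$ permutes the $1$-dimensional subspaces of $V$ and satisfies $B(gx,gy)=0 \iff B(x,y)=0$, so it induces an automorphism $\bar g$ of $Sp(4,q)$. I would ask in addition that (i) $g^{2}=\mu I$ and (ii) $\mu$ is a non-square in $\mathbb{F}_q$; it is exactly (ii) that uses oddness of $q$ (consistently with the fact that the vertex count $(q+1)(q^{2}+1)$ is even precisely when $q$ is odd). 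From (i), $\bar g^{2}=\mathrm{id}$; from (ii), $X^{2}-\mu$ is irreducible over $\mathbb{F}_q$, so the minimal polynomial of $g$ is $X^{2}-\mu$. In particular $g$ is not a scalar map, hence $\bar g\neq\mathrm{id}$, and $g$ has no eigenvector with eigenvalue in $\mathbb{F}_q$, hence $\bar g$ fixes no $1$-subspace. Thus $\bar g$ would be a fixed-point free involution of $Sp(4,q)$.

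To realise such a $g$, fix a non-square $\mu\in\mathbb{F}_q^{*}$, write $\mathbb{F}_{q^{2}}=\mathbb{F}_q(\theta)$ with $\theta^{2}=\mu$, and regard $V$ as $\mathbb{F}_{q^{2}}^{2}$, a $4$-dimensional space over $\mathbb{F}_q$. Let $h$ be the standard nondegenerate $\mathbb{F}_{q^{2}}$-symplectic form on $\mathbb{F}_{q^{2}}^{2}$ and put $B=\operatorname{Tr}_{\mathbb{F}_{q^{2}}/\mathbb{F}_q}\!\circ\, h$. A short argument from nondegeneracy of the trace form shows $B$ is a nondegenerate alternating $\mathbb{F}_q$-bilinear form, and since all nondegenerate symplectic forms on $\mathbb{F}_q^{4}$ are equivalent we may take $B$ to be the form defining $Sp(4,q)$. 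Let $g$ be multiplication by $\theta$. Then $g^{2}=\theta^{2}I=\mu I$ and $B(gx,gy)=\operatorname{Tr}(\theta^{2}h(x,y))=\mu\,B(x,y)$, so (i) and (ii) hold. Equivalently, in a symplectic basis $e_1,e_2,f_1,f_2$ one may take $g$ to be the map $g(e_1)=-f_2$, $g(e_2)=f_1$, $g(f_1)=\mu e_2$, $g(f_2)=-\mu e_1$, and verify $g^{2}=\mu I$ and the similitude identity $g^{\top}Jg=\mu J$ by direct computation.

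It remains to see that $\bar g$ interchanges only adjacent vertices, i.e.\ $B(x,gx)=0$ for every $x\in V$. In the trace model this is immediate: $B(x,gx)=\operatorname{Tr}(\theta\,h(x,x))=0$ because $h(x,x)=0$. More conceptually, $B(x,gx)=0$ for all $x$ is equivalent to the bilinear form $(x,y)\mapsto B(x,gy)$ being alternating; and the similitude identity gives $B(y,gx)=-\mu\,B(x,g^{-1}y)$, so this form is antisymmetric — hence, for $q$ odd, alternating — if and only if $g^{2}=\mu I$. Thus the ``only adjacent vertices'' clause is not an extra condition but is forced by (i). I expect the only genuine difficulty to be recognising the right object — a symplectic similitude whose square is a non-square scalar matrix — after which the statement reduces to the short verifications sketched above.
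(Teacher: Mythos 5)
Your proof is correct and is essentially the paper's argument: the paper likewise views $\mathbb{F}_q^4$ as $\mathbb{F}_{q^2}^2$ and takes the involution to be multiplication by $i$ with $i^2=c$ a non-square in $\mathbb{F}_q$, which is exactly your multiplication by $\theta$ with $\theta^2=\mu$. Your symplectic-similitude/minimal-polynomial framing and the explicit matrix in a symplectic basis are just alternative packagings of the same verifications (automorphism, involution on projective points, no fixed $1$-spaces, $B(x,gx)=0$).
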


\begin{proof}
Let $q$ be an odd prime power. 
For our purposes, we interpret the vector space $\F_q^4$ as a 2-dimensional space $V$ over $\F_{q^2}$.
Let $\F_q^*$ be the multiplicative group of $\F_{q}$.
Then $Sp(4,q)$ can be described as follows.
The vertex set is $\{\F_q^* v : v \in V\backslash\{0\}\}$
(this means that for $a\in\F_q^*$, $(z_1,z_2)$ and $(az_1,az_2)$ represent the same vertex).
Two vertices $(z_1, z_2)$ and $(z'_1, z'_2)$ are adjacent whenever $z_1 z'_2 - z_2 z'_1 \in\F_q$.
Write $z = x+iy \in \F_{q^2}$, with $x, y \in \F_q$ and $i^2 = c$, where $c$ is a non-square in $\F_q$, and define the map $f: V \rightarrow V$ by $f((z_1, z_2)) = (iz_1, iz_2)$.
We will show that $f$ is an involution with the required properties.
\\
(i) 
Consider the vertices $(z_1, z_2)$ and $(z'_1, z'_2)$.
Then $f((z_1,z_2))=(iz_1,iz_2)$ and $f((z'_1,z'_2))=(iz'_1,iz'_2)$
are adjacent whenever $iz_1 iz'_2 - iz_2 iz'_1 \in \F_q$.
We have $iz_1 iz'_2 - iz_2 iz'_1 = i^2(z_1 z'_2 - z_2 z'_1) =
c(z_1 z'_2 - z_2 z'_1)$, which is in $\F_q$ if and only if
$z_1 z'_2 - z_2 z'_1 \in\F_q$.
Therefore, $f$ is an automorphism.
\\
(ii)
$f(f(z_1,z_2))=f(iz_1, iz_2)=(i^2 z_1,i^2 z_2) = (cz_1,cz_2)$ with $c\in\F_q$,
hence $f$ is an involution.
\\
$(iii)$
Clearly $f(z_1, z_2) = (iz_1, iz_2) \neq (az_1,az_2)$ for every $a\in\F_q$.
So, $f$ has no fixed points.
\\
$(iv$)
The vertices $(z_1, z_2)$ and $f((z_1, z_2))=(iz_1,iz_2)$ are adjacent because $iz_1 z_2 - iz_2 z_1 = 0 \in \F_q$.
Therefore, $f$ interchanges only adjacent vertices. 
\end{proof}

\begin{corollary}
With the partition into the orbits of the involution $f$, the complement of ${Sp(4,q)}$ is an almost proper thin DDG with parameters $(v=q^3+q^2+q+1, q^3, q^2(q-1), q^2(q-1), v/2, 2)$.    
\end{corollary}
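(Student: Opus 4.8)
The plan is to assemble the statement from ingredients already in hand. First I would note that the map $f$ from the previous theorem is an automorphism of $Sp(4,q)$, and therefore also an automorphism of the complement $\overline{Sp(4,q)}$; it is a fixed-point free involution in both graphs (it interchanges adjacent vertices in $Sp(4,q)$, hence non-adjacent vertices in $\overline{Sp(4,q)}$, though for the statement only fixed-point freeness is needed). Since $q$ is odd, $v = q^3+q^2+q+1 = (q+1)(q^2+1)$ is even, so $f$ has exactly $v/2$ orbits, each of size $2$.

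Next I would recall, from the Corollary in Section~\ref{SRG} specialised to $t = 2$, that $\overline{Sp(4,q)}$ is a $(v,k,\lambda)$-graph with $v = (q^4-1)/(q-1) = q^3+q^2+q+1$, $k = q^3$, and $\lambda = (q-1)q^2 = q^2(q-1)$. Applying Proposition~\ref{involution} to this $(v,k,\lambda)$-graph together with the involution $f$ then yields that, partitioned into the orbits of $f$, the complement of $Sp(4,q)$ is an almost proper thin DDG.

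It remains to read off the six parameters. By construction the canonical partition has $m = v/2$ classes, each of size $n = 2$; the valency $k = q^3$ is unchanged; and since a $(v,k,\lambda)$-graph has $\lambda = \mu$, every pair of distinct vertices — inside one orbit or across two orbits — has exactly $\lambda$ common neighbours, so $\lambda_1 = \lambda_2 = q^2(q-1)$. This gives the tuple $(q^3+q^2+q+1,\, q^3,\, q^2(q-1),\, q^2(q-1),\, v/2,\, 2)$. There is no genuinely hard step here: all the substantive work lies in the preceding theorem (producing the involution) and in the cited parameter computation for $\overline{Sp(2t,q)}$; the only points requiring a moment's care are the parity of $v$, which guarantees that the orbit partition has all classes of size exactly $2$, and the coincidence $\lambda_1 = \lambda_2$, which is immediate from $\lambda = \mu$.
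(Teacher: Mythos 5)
Your proposal is correct and follows exactly the route the paper intends: combine the involution $f$ from the preceding theorem with the Corollary in Section~2.2 (giving that $\overline{Sp(4,q)}$ is a $(q^3+q^2+q+1,\,q^3,\,q^2(q-1))$-graph) and Proposition~\ref{involution}. The paper leaves this corollary without an explicit proof precisely because the assembly is as routine as you describe.
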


If in the above description of $Sp(4,q)$ the definition of adjacency is changed into $z_1 z'_2 - z_2 z'_1 \in\F_q^*$, then we obtain a graph $Sp(4,q)^*$ which can be obtained from $Sp(4,q)$ by deleting the edges for which $z_1 z'_2 - z_2 z'_1 = 0$;
these are the edges of a collection of $q^2+1$ disjoint cliques of order $q+1$, known as a spread (see~\cite{HT}).
The obtained graph $Sp(4,q)^*$ is an antipodal distance-regular graph of order $v$ and diameter 3, which is also a Mathon graph over the field $\F_{q^2}$ as described in Theorem~\ref{mathon}.
This description uses a subgroup $K$ of the multiplicative group of $\F_{q^2}$ of index $r$. 
If we take $K$ equal to $\F_q^*$, and $r=q+1$ we get $Sp(4,q)^*$, but if we take $K=\F_q^*\cup i\F_q^*$,
and $r=(q+1)/2$ then we get an antipodal distance-regular Mathon graph $M(q^2)$ of order $v/2$ and diameter 3 for which the adjacency matrix $B$ is the quotient matrix of $Sp(4,q)^*$ with respect to the partition of the involution.
It is not difficult to see that $B\equiv R\equiv Q$~(mod~2), where $R$ and $Q$ are the quotient matrix $R$ and its partner $Q$ of the thin DDG obtained above.
Thus, $Q$ is a weighing matrix which gives an orthogonal signing of $M(q^2)$.

\section*{Acknowledgments}
Sergey Goryainov thanks Jiangxi Normal University and Eindhoven University of Technology for hosting his visits to Nanchang and Eindhoven. Willem Haemers thanks Hebei Normal University for hosting his visit in Shijiazhuang. Elena V.~Konstantinova thanks Jiangxi Normal University and Hebei Normal University hosted her visits to Nanchang and Shijiazhuang where this project was discussed. The work of Sergey Goryainov is supported by Natural Science Foundation of Hebei Province (A2023205045). The work of Elena V.~Konstantinova is supported by the state contract of the Sobolev Institute of Mathematics, project No.~FWNF-2026-0011 ``Algebraic and combinatorial invariants of discrete structures''.  The work of Honghai Li is supported by the National Natural Science Foundation of China (Nos. 12561060, 12161047) and  Jiangxi Provincial Natural Science foundation (No. 20224BCD41001).

\end{document}